\newcommand\RE{\mathbb{R}}
\newcommand\NA{\mathbb {N}}
\renewcommand\div{\mathop{\rm{div}}\nolimits}
\newcommand\FF{\boldsymbol{F}}
\newcommand\F{\mathcal{F}}
\newcommand\HH{\mathbf{H}}
\newcommand\Hrot{\HH(\rot;\Omega)}
\newcommand\Hcurl{\HH(\curl;\Omega)}
\newcommand\Hdiv{\HH(\div;\Omega)}
\newcommand\Hocurl{\HH_0(\curl;\Omega)}
\newcommand\Hodivo{\HH_0(\div^0;\Omega)}
\newcommand\Huo{\H^1_0(\Omega)}
\newcommand\I{\mathrm{I}}
\newcommand\Ph{\boldsymbol{P}_{\!h}}
\newcommand\PH{\boldsymbol{P}_{\!H}}
\newcommand\W{\boldsymbol{\mathcal{W}}}
\newcommand\bbeta{\boldsymbol{r}}
\newcommand\grad{\operatorname{\boldsymbol{\nabla}}}
\newcommand\curl{\operatorname{\mathbf{curl}}}
\newcommand\rot{\operatorname{curl}}
\newcommand\hsH{\widehat{\ssigma}_H}
\newcommand\hpH{\widehat{\pp}_H}
\newcommand\tHi{\widetilde{\ssigma}_{H,i}}
\newcommand\tH{\widetilde{\ssigma}_H}
\newcommand\lh{\lambda_h}
\newcommand\lH{\lambda_H}
\newcommand\n{\boldsymbol{n}}
\newcommand\omh{\om_h}
\newcommand\om{\omega^2}
\newcommand\pphi{\boldsymbol{w}}
\newcommand\ppsi{\boldsymbol{\xi}}
\newcommand\sh{\ssigma_h}
\newcommand\sH{\ssigma_H}
\newcommand\ssigma{\boldsymbol{\sigma}}
\newcommand\cchi{\boldsymbol{\chi}}
\newcommand\xxi{\boldsymbol{\vartheta}}
\newcommand\T{\mathcal T}
\newcommand\tria{\mathcal{T}_h}
\newcommand\ttau{\boldsymbol{\tau}}
\newcommand\uu{\boldsymbol{u}}
\newcommand\vv{\boldsymbol{v}}
\newcommand\ww{\boldsymbol{\zeta}}
\newcommand\zz{\boldsymbol{z}}
\newcommand\hh{\boldsymbol{\mathfrak{h}}}
\renewcommand\ss{\boldsymbol{s}}
\newcommand\pp{\boldsymbol{p}}
\newcommand\qq{\boldsymbol{q}}
\newcommand\N{N}
\newcommand\Q{\boldsymbol{Q}}
\newcommand\Sh{\boldsymbol{\Sigma}_h}
\newcommand\SH{\boldsymbol{\Sigma}_H}
\renewcommand\H{\mathrm{H}}
\newcommand{\Ws}{\widetilde W}
\newcommand{\Wm}{W}
\renewcommand\dim{\operatorname{\mathrm{dim}}}
\renewcommand\div{\operatorname{\mathrm{div}}}
\newcommand\card{\mathrm{card}}
\newcommand\etas{\tilde\eta}
\newcommand\etam{\eta}
\newcommand*{\jump}[1]{\lbrack\hspace{-2pt}\lbrack%
#1\rbrack\hspace{-2pt}\rbrack}
\newcommand\Span{\mathrm{span}}
\newcommand\errsh{\|\ssigma-\sh\|_0}
\newcommand\errph{\|\pp-\pp_h\|_0}
\newcommand\errsH{\|\ssigma-\ssigma_H\|_0}
\newcommand\errpH{\|\pp-\pp_H\|_0}
\newcommand\errsl{\|\ssigma_\ell-\ssigma_{\ell+1}\|_0}
\newcommand\errpl{\|\pp_\ell-\pp_{\ell+1}\|_0}
\newcommand\errshH{\|\sh-\ssigma_H\|_0}
\newcommand\errphH{\|\pp_h-\pp_H\|_0}
\newcommand\rhosc{\rho_{\mathrm{sc}}}
\newcommand\g{\mathbf{g}}
\newcommand\sg{\ssigma_g}
\newcommand\pg{\pp_g}
\newcommand\sgh{\ssigma_{g,h}}
\newcommand\pgh{\pp_{g,h}}
\newcommand\fortinH{\boldsymbol{\Pi}^F_H}
\newcommand\hx{\boldsymbol{\mathcal{P}}_H}
\newcommand\schoberl{\boldsymbol{\mathcal{S}}_H}
\newcommand\harm{\mathcal{H}}
\newcommand\nome{\mathscr{E}}
\DeclareSymbolFont{sfletters}{OML}{cmbrm}{m}{it}
\DeclareMathSymbol{\sfsigma}{\mathord}{sfletters}{"1B}
\theoremstyle{plain}
\newtheorem{thm}{Theorem}
\newtheorem{proposition}[thm]{Proposition}
\newtheorem{lemma}[thm]{Lemma}
\theoremstyle{remark}
\newtheorem{remark}{Remark}
\newtheorem{property}{Property}
\begin{document}

\title[AFEM for Maxwell's eigenvalues]
{Adaptive finite element method for the Maxwell eigenvalue problem}
\author{Daniele Boffi}
\address{Dipartimento di Matematica ``F. Casorati'', Universit\`a di Pavia,
Italy and Department of Mathematics and System Analysis, Aalto University,
Finland}
\email{daniele.boffi@unipv.it}
\urladdr{http://www-dimat.unipv.it/boffi/}
\author{Lucia Gastaldi}
\address{DICATAM, Universit\`a di Brescia, Italy}
\email{lucia.gastaldi@unibs.it}
\urladdr{http://lucia-gastaldi.unibs.it}
\subjclass{65N30, 65N25, 35Q61, 65N50}
\keywords{edge finite elements, eigenvalue problem, Maxwell's equations,
adaptive finite element method}

\begin{abstract}
In this paper we prove the optimal convergence of a standard adaptive scheme
based on edge finite elements for the approximation of the solutions of the
eigenvalue problem associated with Maxwell's equations. The proof uses the
known equivalence of the problem of interest with a mixed eigenvalue problem.
\end{abstract}
\maketitle

\section{Introduction}

In this paper we present an adaptive scheme, based on standard three
dimensional edge elements, for the approximation of the Maxwell eigenvalue
problem and analyze its convergence.

A posteriori error estimates for Maxwell's equations have been studied by
several authors for the source problem (see, in
particular~\cite{monk98,beck,nic0,cina,nic1,nic2,schoberl,nicc1,burg1,nicc2,nicc3,burg2,nicc4,nicc5} and the
references therein). The eigenvalue problem has been studied only recently
in~\cite{BGRI1,BGRI2} where residual type error indicators are considered and
proved to be equivalent to the actual error in the standard framework of
efficiency and reliability. The analysis relies on the classical equivalence
with a mixed variational formulation~\cite{bfgp}. The numerical results
presented in~\cite{BGRI2} confirm that the adaptive scheme driven by our
error indicator converges in three dimensions with optimal rate with respect
to the number of degrees of freedom.

In~\cite{dietmar} it was presented the first convergence analysis for an
adaptive scheme applied to the Laplace eigenvalue problem in mixed form. The
main tools for the analysis originate from various papers related to adaptive
finite elements, in particular from~\cite{ckns,gmz}.
Thanks to the well-known isomorphism
between the spaces $\Hrot$ and $\Hdiv$ in two space dimensions, the result for
the Laplacian implies the convergence of the 2D adaptive scheme for Maxwell's
eigenproblem: actually, the isomorphism carries over to the corresponding
mixed formulation as well as to the error indicators.
In this paper we extend the results of~\cite{dietmar} to the mixed formulation
associated with Maxwell's eigenproblem in three dimensions; as we will notice,
such extension is not trivial: several technical details have to be checked
and suitably designed interpolation operators are used to complete the
analysis. Useful results in this direction are reported
in~\cite{schoberl,zcswx}.

It is well understood that the convergence analysis of the adaptive scheme
for eigenvalue problems has to consider multiple eigenvalues and clusters of
eigenvalues in order to prevent subtoptimal convergence. In particular,
degeneracy of the convergence may be observed when the error indicator is
computed by taking into account only a subset of the discrete eigenmodes
approximating the eigensolutions we are interested in (multiple or belonging
to a cluster)~\cite{giani,gallistl,duran}.

Starting from this remark, the analysis performed in~\cite{dietmar} has been
carried on for generic clusters of eigenvalues. This approach has the
inconvenience of adding a heavy notation dealing with deep technicalities. For
this reason, we decided in this paper to develop our theory in the case of
simple eigenvalues. We believe that the presentation in the case of a simple
eigenvalue highlights better the novelties with respect to the previous
results for the mixed Laplacian, that would rather be hidden by the technical
machinery related to clusters of eigenvalues. Nevertheless, the general case
can be dealt with by using similar arguments as in~\cite{dietmar}.

In Section~\ref{se:maxwell} we recall Maxwell's eigenvalue problem, its
standard variational formulation, and the equivalent mixed formulation,
together with their finite element discretizations. Section~\ref{se:adaptive}
defines our error indicator and describes the adaptive scheme. Reliability and
efficiency from~\cite{BGRI2} are recalled and the theory concerning the
convergence of the adaptive method is described. The auxiliary results needed
for the convergence proof are collected in Section~\ref{se:auxiliary}. These
include in particular discrete reliability, quasi-orthogonality, and
contraction property.

\section{Maxwell's eigenvalue problem and its finite element discretization}
\label{se:maxwell}

In this paper we deal with the well known eigenvalue problem associated with
the Maxwell equations (see, for instance,~\cite{hiptmair,monk,acta}).

Given a polyhedral domain $\Omega$, after eliminating the magnetic field, the
problem reads: find $\omega\in\RE$ and $\uu\ne0$ such that
\begin{equation}
\aligned
&\curl(\mu^{-1}\curl\uu)=\omega^2\varepsilon\uu&&\text{in }\Omega\\
&\div(\varepsilon\uu)=0&&\text{in }\Omega\\
&\uu\times\n=0&&\text{on }\partial\Omega,
\endaligned
\label{eq:strong}
\end{equation}
where $\uu$ represents the electric field, $\mu$ and $\varepsilon$ the magnetic
permittivity and electric permeability, respectively, and $\n$ is the outward
unit normal vector to $\partial\Omega$, the boundary of $\Omega$. For general
inhomogeneous, anisotropic materials $\mu$ and $\varepsilon$ are $3$-by-$3$
positive definite and bounded matrix functions.
We are considering for simplicity the case when $\Omega$ is simply connected:
more general situations will be described in Remark~\ref{re:multiply}.

A standard variational formulation of our eigenvalue problem is obtained by
considering the functional space $\Hocurl$ of vector fields in $L^2(\Omega)^3$
with $\curl$ in $L^2(\Omega)^3$ and vanishing tangential component along
$\partial\Omega$. The formulation reads: find $\omega\in\RE$ with $\omega>0$
and $\uu\in\Hocurl$ with $\uu\ne0$ such that
\begin{equation}
(\mu^{-1}\curl\uu,\curl\vv)=\omega^2(\varepsilon\uu,\vv)
\quad\forall\vv\in\Hocurl.
\label{eq:var}
\end{equation}
It is well known, in particular, that the condition $\omega^2\ne0$ is
equivalent to the divergence condition $\div(\varepsilon\uu)=0$ due to the
Helmholtz decomposition (see also Remark~\ref{re:Nh}). We assume that the
domain $\Omega$ and the coefficients $\varepsilon$, $\mu$ are such that the
problem is associated with a compact solution operator.
The eigenvalues can then be numbered in an increasing order as follows:
\[
0<\omega_1\le\omega_2\le\dots\le\omega_j\le\dots,
\]
where the same eigenvalue is repeated as many times as its algebraic
multiplicity. The associated eigenfunctions are denoted by $\uu_j$ and
normalized according to the $L^2$ norm, that is
$\|\varepsilon^{1/2}\uu_j\|_0=1$.

A powerful tool for the analysis of this problem is a mixed formulation
introduced in~\cite{bfgp}. With the notation $\ssigma=\omega\uu$,
$\pp=-\mu^{-1/2}\curl\uu/\omega$, and $\lambda=\omega^2$, the variational
formulation~\eqref{eq:var} is equivalent to the following mixed eigenproblem:
find $\lambda\in\RE$ and $(\ssigma,\pp)\in\Hocurl\times\Q$ with $\pp\ne0$ such
that
\begin{equation}
\aligned
&(\varepsilon\ssigma,\ttau)+(\mu^{-1/2}\curl\ttau,\pp)=0
&&\forall\ttau\in\Hocurl\\
&(\mu^{-1/2}\curl\ssigma,\qq)=-\lambda(\pp,\qq)&&\forall\qq\in\Q,
\endaligned
\label{eq:varmixed}
\end{equation}
where $\Q=\mu^{-1/2}\curl(\Hocurl)$.

The eigenvalues of~\eqref{eq:varmixed} are denoted by
\[
0\le\lambda_1\le\lambda_2\le\dots\le\lambda_j\le\dots.
\]
Given $j$, we use the notation $\pp_j=-\mu^{-1/2}\curl\uu_j/\omega_j$ and
$\ssigma_j=\omega_j\uu_j$ with $\lambda_j=\omega_j^2$, so that
$(\lambda_j,\ssigma_j,\pp_j)$ solves~\eqref{eq:varmixed} and the following
normalization holds true for the eigenfunction: $\|\pp_j\|_0=1$.

The finite element approximation of~\eqref{eq:var} is usually performed with
edge elements. Given a tetrahedral decomposition of $\Omega$, we consider
N\'ed\'elec edge elements introduced in~\cite{nedelec1,nedelec2}. More general
families of finite elements could be considered in the spirit
of~\cite{periodic-table}. More precisely, the general situation can be
described by adopting the following standard notation related to de Rham
complex:
\begin{equation}
\minCDarrowwidth25pt
\begin{CD}
0@>>>\Huo@>\grad>>\Hocurl@>\curl>>\Hodiv@>\div>>L^2(\Omega)@>>>\RE\\
& & @VVV @VVV @VVV @VVV & &\\
0@>>>\N_h@>\grad>>\Sh@>\curl>>\FF_h@>\div>>DG_h@>>>\RE.
\end{CD}
\label{eq:derham}
\end{equation}

In the case when $\Sh$ is a sequence of tetrahedral edge finite elements the
remaining finite element spaces will be composed by nodal Lagrange elements
$\N_h$, face elements $\FF_h$, and discontinuous elements $DG_h$,
respectively. The corresponding diagrams in the case of N\'ed\'elec elements
of the first and second family can be found in~(2.5.58) and~(2.5.59)
of~\cite{bbf}, respectively.

The discretization of~\eqref{eq:var} reads: find $\omega_h\in\RE$ with
$\omega_h>0$ and $\uu_h\in\Sh$ with $\uu_h\ne0$ such that
\begin{equation}
(\mu^{-1}\curl\uu_h,\curl\vv)=\omega_h^2(\varepsilon\uu_h,\vv)
\quad\forall\vv\in\Sh.
\label{eq:varh}
\end{equation}

The corresponding mixed formulation is: find
$\lh\in\RE$ and $(\sh,\pp_h)\in\Sh\times\Q_h$ with $\pp_h\ne0$ such
that
\begin{equation}
\aligned
&(\varepsilon\sh,\ttau)+(\mu^{-1/2}\curl\ttau,\pp_h)=0
&&\forall\ttau\in\Sh\\
&(\mu^{-1/2}\curl\sh,\qq)=-\lh(\pp_h,\qq)&&\forall\qq\in\Q_h,
\endaligned
\label{eq:varmixedh}
\end{equation}
where $\Q_h=\mu^{-1/2}\curl(\Sh)$. In particular, we have that $\Q_h$ is a
subspace of $\mu^{-1/2}\FF_h$ and it can be easily seen that
$\mu^{-1/2}\curl\sh=-\lh\pp_h$.

Following~\cite[Th.~2.1]{bfgp}, the equivalence between~\eqref{eq:varh}
and~\eqref{eq:varmixedh} can be proved using the definition of $\Q_h$
and the identities $\sh=\omega_h\uu_h$,
$\pp_h=-\mu^{-1/2}\curl\uu_h/\omega_h$, and $\lh=\omega_h^2$.

With natural notation, we denote by
$0<\omega_{h,1}\le\omega_{h,2}\le\dots\le\omega_{h,N(h)}$ the eigenvalues
of~\eqref{eq:varh} and by
$0<\lambda_{h,1}\le\lambda_{h,2}\le\dots\le\lambda_{h,N(h)}$ those
of~\eqref{eq:varmixedh}. Analogously, the corresponding eigenfunctions are
denoted by $\uu_{h,j}$ and $(\ssigma_{h,j},\pp_{h,j})$, respectively
($j=1,\dots,N(h)$) with $\|\varepsilon^{1/2}\uu_{h,j}\|_0=\|\pp_{h,j}\|_0=1$.
The number of discrete frequencies, repeated according to their multiplicity,
is given by $N(h)=\dim\Q_h$. We discuss this fact in the next remark.
\begin{remark}
It is straightforward to check that the number of real eigenvalues of
problem~\eqref{eq:varmixedh} is equal to $N(h)=\dim\Q_h$.
Indeed, the matrix form of~\eqref{eq:varmixedh} is, with obvious notation,
\[
\left(
\begin{matrix}
\mathsf{A}&\mathsf{B}^\top\\
\mathsf{B}&\mathsf{0}
\end{matrix}
\right)
\left(
\begin{matrix}
\sfsigma\\\mathsf{p}
\end{matrix}
\right)
=
\lambda_h
\left(
\begin{matrix}
\mathsf{0}&\mathsf{0}\\
\mathsf{0}&-\mathsf{M}
\end{matrix}
\right)
\left(
\begin{matrix}
\sfsigma\\\mathsf{p}
\end{matrix}
\right).
\]
The number of real eigenvalues of this problem is equal to the size $N(h)$ of
the matrix $\mathsf{M}$, as it is evident by looking at its equivalent
formulation written in terms of the Schur complement
\[
\aligned
&\mathsf{B}\mathsf{A}^{-1}\mathsf{B}^\top\mathsf{p}=\lambda_h\mathsf{M}\mathsf{p}\\
&\sfsigma=-\mathsf{A}^{-1}\mathsf{B}^\top\mathsf{p}.
\endaligned
\]
The size of the matrix problem corresponding to~\eqref{eq:varh} is equal to
the dimension of the space $\Sh$. The Helmholtz decomposition in the case of
simply connected domains implies that
$\dim(\Sh)=\dim(\grad(\N_h))+N(h)$. Since the space
$\grad(\N_h)$ is the kernel of the $\curl$ operator, it follows that
the number of eigenvalues corresponding to $\omega_h>0$ is equal to $N(h)$.
For an additional discussion about this count when the domain is multiply
connected, the reader is referred to Remark~\ref{re:multiply}.

\label{re:Nh}
\end{remark}
\begin{remark}
It can be useful to recall that the mixed formulations~\eqref{eq:varmixed}
and~\eqref{eq:varmixedh} are not used for the definition of the method (nor
for its implementation), but are crucial ingredients for its analysis.
\label{re:equiv}
\end{remark}

\begin{remark}
It is well known that if the domain is not topologically trivial, then the
first row of the diagram presented in Equation~\eqref{eq:derham} is not an
exact sequence. More precisely, the following space of \emph{harmonic forms}
plays an important role
\[
\harm=\{\hh\in\Hocurl:\curl\hh=0,\ \div(\varepsilon\hh)=0\text{ in }\Omega\}
\]
and corresponds to the one form cohomology of the de Rham complex. The
Helmholtz decomposition in this case has the following form:
\[
L^2(\Omega)^3=\grad(\Huo)\oplus\harm\oplus\varepsilon^{-1}\curl(\Hcurl),
\]
where the three components of the decomposition are $\varepsilon$-orthogonal,
that is they are orthogonal with respect to the scalar product
$(\varepsilon\,\cdot,\cdot)$.

It turns out that in the general case the formulation~\eqref{eq:var} is not
the variational formulation of~\eqref{eq:strong} any more. Indeed, functions
in $\harm$ are eigenfunctions of~\eqref{eq:strong} with vanishing frequency.
In this case, if we are not interested in the approximation of the space of
harmonic functions $\harm$, we can disregard the zero frequency and use
formulations~\eqref{eq:var} and~\eqref{eq:varmixed} for the analysis of the
rest of the spectrum. The approximation of harmonic functions is out of the
aims of this paper. We point the reader to possible approaches for the
approximation of $\harm$: a direct discretization of the space has been
proposed in~\cite{abgv}; an adaptive algorithm has been presented
in~\cite{demlow}; another indirect approach may be the use of the following
alternative mixed formulation known as \emph{Kikuchi formulation}
(see~\cite{kik,bbab}): find $\lambda\in\RE$ such that for $\uu\in\Hocurl$ and
$p\in\Huo$, with $\uu\ne0$, it holds
\[
\aligned
&(\mu^{-1}\curl\uu,\curl\vv)+(\grad p,\varepsilon\vv)=
\lambda(\varepsilon\uu,\vv)&&\forall\vv\in\Hocurl\\
&(\grad q,\varepsilon\uu)=0&&\forall q\in\Huo.
\endaligned
\]
It is not difficult to see that any solution of the Kikuchi formulation
satisfies $p=0$ (take $\vv=\grad p$ in the first equation). Hence, it is
immediate to check that the Kikuchi formulation is equivalent to the
standard variational formulation~\eqref{eq:var} with the additional solution
$\lambda=0$ corresponding to $\uu\in\harm$.
\label{re:multiply}
\end{remark}

\section{Error indicator and adaptive method}
\label{se:adaptive}

We are going to study and analyze an adaptive finite element scheme in the
framework of~\cite{dorfler,dxz,ckns,gmz,dietmar}. The scheme is based on the
following local error indicator (see~\cite{BGRI2})
\begin{equation}
\begin{split}
\etas^2_K={}&
h^2_K\|\varepsilon\uu_h-\curl(\mu^{-1}\curl\uu_h)/\omega_h^2\|^2_{0,K}
+h^2_K\|\div(\varepsilon\uu_h)\|^2_{0,K}\\
&+\frac12\sum_{F\in\F_{\I}(K)}
\left[h_F\left\|\jump{\left(\mu^{-1}\curl\uu_h/\omh\right)\times\n}\right\|^2_{0,F}
+h_F\left\|\jump{\varepsilon\uu_h\cdot\n}\right\|^2_{0,F}\right],
\end{split}
\label{eq:ind1}
\end{equation}
where $K$ is an element of our triangulation $\T_h$, $\F_{\I}(K)$ is the set
of inner faces of $K$, $h_K$ and $h_F$ the diameters of $K$, and $F$,
respectively, and $\jump{\cdot}$ the jump across an inner face $F$.

Given a set of elements $\mathcal M$, we use the notation
\[
\etas(\mathcal{M})^2=\sum_{K\in\mathcal{M}}\etas^2_K
\]
and we write $\etas=\etas(\T_h)$ for the global error indicator when no
confusion arises. Moreover, a subscript $\kappa$ is used when $\etas_\kappa$
refers to the mesh $\T_\kappa$.

Given an initial mesh $\T_0$ and a bulk parameter $\theta\in\RE$, with
$0<\theta\le1$, we compute a sequence of meshes $\{\T_\ell\}$, solutions
$\{(\omega^2_\ell,\uu_\ell)\}$, and indicators $\{\etas(\T_\ell)\}$
according to the standard \textsf{solve/estimate/mark/refine}
strategy (see~\cite{dorfler}). In particular, at a given level $\ell$, the
marking step consists in choosing a minimal subset $\mathcal{M}_\ell$ of
$\T_\ell$ such that
\[
\theta\etas^2_\ell(\T_\ell)\le\etas^2_\ell(\mathcal{M}_\ell).
\]
The new mesh $\T_{\ell+1}$ is given by the smallest admissible refinement of
$\T_\ell$ satisfying $\mathcal{M}_\ell\cap\T_{\ell+1}=\emptyset$ according to
the rules defined in~\cite{bdd,stevenson}.

Considering the equivalence between the standard formulation~\eqref{eq:varh}
and the mixed formulation~\eqref{eq:varmixedh}, the local error indicator for
the mixed problem takes the following form:
\begin{equation}
\begin{split}
\etam^2_K={}&
h_K^2\|\varepsilon\sh+\curl(\mu^{-1/2}\pp_h)\|^2_{0,K}
+h_K^2\|\div(\varepsilon\sh)\|^2_{0,K}\\
&+\frac12\sum_{F\in\F_{\I}(K)}
\left(h_F\left\|\jump{(\mu^{-1/2}\pp_h)\times\n}\right\|^2_{0,F}
+h_F\left\|\jump{\varepsilon\sh\cdot\n}\right\|^2_{0,F}\right)
\end{split}
\label{eq:ind2}
\end{equation}

It is easy to check that the following relation between the two indicators
holds true
\[
\etas^2_K=\frac1{\lh}\etam^2_K\quad\forall K\in\T_h.
\]
In particular, the comments stated in Remark~\ref{re:equiv} can be extended to
the error indicators: our analysis will be performed by using the mixed
formulation~\eqref{eq:varmixedh} and the indicator~\eqref{eq:ind2} even if the
scheme is originally defined in terms of the standard
formulation~\eqref{eq:varh} and the indicator~\eqref{eq:ind1}.

In the rest of this section we present our main result in the case of an
eigenvalue of multiplicity one, since we believe that in this case it is
easier to describe the main arguments leading to the optimal convergence of
the adaptive scheme.
Moreover, for ease of notation, we assume from now on that $\varepsilon$ and
$\mu$ are scalar and $\varepsilon=\mu=1$. This assumption does not reduce the
relevance of our result: more general situations can be dealt with by adopting
similar arguments as in~\cite{bfgp} or~\cite{cfr}.

Let $\omega=\omega_j$ be a simple eigenvalue of~\eqref{eq:var} and
$\Ws=\Span\{\uu_j\}$ the associated one-dimensional eigenspace. Let
$\omega_\ell=\omega_{\ell,j}$ be the $j$-th discrete eigenvalue
of~\eqref{eq:varh} computed with the adaptive scheme on the mesh $\T_\ell$ and
$\Ws_\ell=\Span\{\uu_{\ell,j}\}$ the corresponding eigenspace.
The gap between $\Ws$ and $\Ws_\ell$ is measured by
\[
\delta(\Ws,\Ws_\ell)=
\sup_{\substack{\uu\in\Ws\\ \|\uu\|_{\curl}=1}}\inf_{\uu_\ell\in \Ws_\ell}
\|\uu-\uu_\ell\|_{\curl}.
\]

For the reader's convenience, we recall the reliability and efficiency
properties proved in~\cite{BGRI2}. As it is common for eigenvalue problems,
the efficiency property is not local in the sense that it relies on the
difference between $\omega$ and $\omega_h$ which is a global quantity.

\begin{proposition}

Let $(\omega,\uu)$ and $(\omega_h,\uu_h)$ be solutions of
Problems~\ref{eq:var} and~\ref{eq:varh}, respectively, such that the latter
approximates the former as $h$ goes to zero. Then, there exists $C$ such that
for $h$ small enough
\begin{description}
\item[Reliability]
\[
\|\uu-\uu_h\|_{\curl}\le C\etas\qquad
|\omega^2-\omega^2_h|\le C\etas^2.
\]
\item[Efficiency]
\[
\etas_K\le C\left(\left\|\uu-\uu_h\right\|_{0,K'}+
\left\|\curl(\uu-\uu_h)\right\|_{0,K'}+
h_K\left\|\om\uu-\omh\uu_h\right\|_{0,K'}\right),
\]
where $K'$ denotes the union of the elements sharing a face with $K$.
\end{description}
\end{proposition}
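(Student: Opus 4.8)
The plan is to prove both properties by a residual-based a posteriori analysis of~\eqref{eq:varh}, organized around the observation that the two families of terms in the indicator~\eqref{eq:ind1}---the curl--curl residual $\varepsilon\uh-\curl(\mu^{-1}\curl\uh)/\omega_h^2$ with its tangential jump, and the divergence residual $\div(\varepsilon\uh)$ with its normal jump---are precisely the contributions attached to the two components of a Helmholtz decomposition of the error.

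For reliability I would first decompose $\uu-\uh=\grad\psi+\ww$ according to the $\varepsilon$-orthogonal splitting of $\Hocurl$, with $\psi\in\Huo$ and $\div(\varepsilon\ww)=0$. The gradient part is controlled by the divergence terms alone: testing the identity $\div(\varepsilon\uu)=0$ against $\psi$, subtracting its discrete counterpart (Galerkin orthogonality against $\grad(\N_h)\subset\Sh$), and integrating by parts element-wise produces exactly $\|\div(\varepsilon\uh)\|_{0,K}$ and the normal jumps $\jump{\varepsilon\uh\cdot\n}$, after inserting a scalar Scott--Zhang quasi-interpolant with local $L^2$-approximation estimates. For the solenoidal part $\ww$ I would subtract~\eqref{eq:varh} from~\eqref{eq:var}, test against $\ww-\Pih\ww$ with a commuting quasi-interpolation operator $\Pih\colon\Hocurl\to\Sh$ enjoying local $L^2$-approximation and $\curl$-stability, and integrate by parts on each tetrahedron; this yields the volume residual $\curl(\mu^{-1}\curl\uh)/\omega_h^2-\varepsilon\uh$ and the tangential jumps $\jump{(\mu^{-1}\curl\uh/\omega_h^2)\times\n}$. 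Summing the two estimates gives $\|\uu-\uh\|_{\curl}\le C\etas$. The eigenvalue bound then follows from the classical eigenvalue-error identity that expresses $\omega^2-\omega_h^2$ as a quadratic form in the eigenfunction error, whence $|\omega^2-\omega_h^2|\le C\|\uu-\uh\|_{\curl}^2\le C\etas^2$, valid once $h$ is small enough that $\uh$ approximates $\uu$.

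Efficiency I would obtain by the standard Verf\"urth bubble-function technique applied locally. Multiplying the interior residual by the element bubble supported on $K$ and integrating gives a lower bound for $h_K\|\varepsilon\uh-\curl(\mu^{-1}\curl\uh)/\omega_h^2\|_{0,K}$ in terms of $\|\uu-\uh\|_{0,K}+\|\curl(\uu-\uh)\|_{0,K}$ plus the data term $h_K\|\om\uu-\omh\uh\|_{0,K}$, the last contribution appearing because the exact right-hand side $\om\varepsilon\uu$ must be replaced by its computable discrete counterpart; inverse inequalities and norm equivalence on polynomial spaces close the argument. The tangential and normal jumps are treated analogously with face bubbles extended to the two elements adjacent to $F$, which is exactly why the efficiency constant is phrased over the patch $K'$ of elements sharing a face with $K$.

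The step I expect to be the main obstacle is the construction and analysis of the commuting, tangential-trace-preserving quasi-interpolation $\Pih$ in three dimensions. In two dimensions the rotated isomorphism between $\Hrot$ and $\Hdiv$ transfers everything from the mixed Laplacian, but in 3D one genuinely needs a Sch\"oberl-type operator that is simultaneously locally $L^2$-approximating and stable in the $\curl$-seminorm, commutes with the de Rham complex~\eqref{eq:derham}, and respects the vanishing tangential component on $\partial\Omega$; moreover the regular decomposition underlying the treatment of the solenoidal component $\ww$ must be compatible with this operator. Once this interpolation machinery is available (following~\cite{schoberl,zcswx}), the remaining integration-by-parts and bubble-function computations are routine.
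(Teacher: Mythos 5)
The paper does not actually reprove this proposition: its proof is the citation ``See Propositions~5 and~6 of~\cite{BGRI2}.'' So your proposal has to be measured against what such a proof must contain. Your efficiency argument is fine: element bubbles for the interior residuals, face bubbles for the two jump terms (which is precisely why the bound is over the patch $K'$), and the term $h_K\|\om\uu-\omh\uh\|_{0,K'}$ arising because the exact right-hand side is replaced by the computable discrete one. That matches the standard route taken in~\cite{BGRI2}.

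The reliability part, however, has a genuine gap: you run the source-problem template (Helmholtz decomposition, Galerkin orthogonality, commuting quasi-interpolant) as if the residual controlled the error, but for an eigenvalue problem it does not, because the continuous and discrete equations~\eqref{eq:var} and~\eqref{eq:varh} have \emph{different right-hand sides}. Concretely, writing $\uu-\uh=\grad\psi+\ww$ and testing the error equation with $\ww$ (inserting $\Pih\ww$ and integrating by parts) leaves, besides the residual terms of~\eqref{eq:ind1}, the mismatch term $(\om\varepsilon\uu-\omh\varepsilon\uh,\ww)$. Splitting it as $\om(\varepsilon(\uu-\uh),\ww)+(\om-\omh)(\varepsilon\uh,\ww)$ and using the $\varepsilon$-orthogonality of your decomposition, the first contribution equals $\om\|\varepsilon^{1/2}\ww\|_0^2$ with the bad sign, so what your steps actually yield is
\[
\|\mu^{-1/2}\curl\ww\|_0^2-\om\,\|\varepsilon^{1/2}\ww\|_0^2
\le C\etas\,\|\ww\|_{\curl}+C\,|\om-\omh|\,\|\ww\|_0 .
\]
The quadratic form on the left is \emph{not} coercive on the solenoidal subspace: by the Rayleigh-quotient characterization of the spectrum it vanishes at $\uu_j$ (the very eigenfunction being approximated, since $\om=\omega_j^2$) and is negative on the lower eigenmodes $\uu_i$, $i<j$. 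Hence no absorption argument closes the estimate, no matter how small $h$ is, and $\|\uu-\uh\|_{\curl}\le C\etas$ does not follow from the steps you describe. The missing ingredient is eigenvalue-specific: one must expand the error in the eigenbasis, use the normalization and sign of $\uh$ (so that the component of the error along the eigenspace is of higher order), a spectral-gap condition, and an $L^2$/duality superconvergence estimate showing that $\|\uu-\uh\|_0$ and $|\om-\omh|$ are asymptotically negligible compared with $\|\uu-\uh\|_{\curl}$ --- exactly the role played by Lemma~\ref{le:superconv} and by the terms $\rho_{\mathrm{rel1}}(h)$, $\rho_{\mathrm{rel2}}(h)$ in the mixed-form reliability statement, and the real reason the proposition is only claimed ``for $h$ small enough.'' By contrast, the difficulty you flag as the main obstacle (the Sch\"oberl/commuting quasi-interpolation in 3D) is indeed needed, but it is available off the shelf from~\cite{schoberl,zcswx}; the eigenvalue-specific absorption is the step your plan cannot do without.
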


\begin{proof}
See Propositions~5 and~6 of~\cite{BGRI2}.
\end{proof}

The convergence of the adaptive scheme is usually described by making use of
the nonlinear approximation classes discussed in~\cite{bdd}. Denoting by
$\mathbb{T}(m)$ the set of admissible refinements of $\T_0$ whose cardinality
differs from that of the initial triangulation by less than $m$, the best
algebraic convergence rate $s\in(0,+\infty)$ for the approximation of
functions belonging to a space $\W$ is characterized in terms of the
following seminorm
\[
|\W|_{\mathcal{A}_s}=
\sup_{m\in\NA}m^s\inf_{\T\in\mathbb{T}(m)}\delta(\W,\boldsymbol{\Sigma}_\T),
\]
where $\boldsymbol{\Sigma}_\T$ is the edge finite element space on the mesh
$\T$.

The main result of our paper, stated in the next theorem, shows that if $\Ws$
has bounded $\mathcal{A}_s$-seminorm for some $s$, then the optimal
convergence order $s$ is obtained by the sequence of solutions constructed by
the adaptive procedure described above.
\begin{thm}
Provided the meshsize of the initial mesh $\T_0$ and the bulk parameter
$\theta$ are small enough, if the eigenspace satisfies
$|\Ws|_{\mathcal{A}_s}<\infty$, then the sequence of discrete eigenspaces
$\Ws_\ell$ computed on the mesh $\T_\ell$ fulfills the optimal estimate
\[
\delta(\Ws,\Ws_\ell)\le
C(\card(\T_\ell)-\card(\T_0))^{-s}|\Ws|_{\mathcal{A}_s}.
\]
Moreover, the eigenvalue satisfies the optimal double order rate of
convergence
\[
|\omega-\omega_\ell|\le C\delta(\Ws,\Ws_\ell)^2.
\]
\label{th:main}
\end{thm}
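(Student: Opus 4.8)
The plan is to follow the now-standard adaptive finite element machinery of \cite{ckns,gmz,dietmar} adapted to the mixed eigenproblem \eqref{eq:varmixedh}. Since the indicator $\etam$ for the mixed formulation is pointwise proportional to $\etas$ (via $\etas_K^2=\lh^{-1}\etam_K^2$) and the discrete eigenvalues $\lh$ are bounded away from zero once the mesh is fine enough, I would work entirely with the mixed indicator and transfer the conclusion back at the end. The backbone of the argument is a \emph{contraction property}: I would show that there exist constants $\rho\in(0,1)$ and $\beta>0$ such that a suitable combined quantity, namely the sum of the squared eigenfunction error $\errsl^2+\errpl^2$ (measured between consecutive adaptive levels, or between the exact eigenpair and the discrete one) and a scaled indicator $\beta\,\etam_{\ell+1}^2$, contracts from level $\ell$ to level $\ell+1$. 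The three ingredients feeding this contraction are exactly those flagged at the end of the introduction and in Section~\ref{se:auxiliary}: discrete reliability, quasi-orthogonality, and an estimator reduction estimate coming from the D\"orfler marking.

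Concretely, I would proceed in the following order. First, establish \emph{estimator reduction}: using the marking condition $\theta\etas^2_\ell(\T_\ell)\le\etas^2_\ell(\MM_\ell)$ together with the local Lipschitz dependence of the indicator on the discrete solution and the fact that refined elements shrink $h_K,h_F$, one obtains $\etam_{\ell+1}^2\le(1-\gamma)\etam_\ell^2+C\bigl(\errsl^2+\errpl^2\bigr)$ for some $\gamma\in(0,1)$ depending on $\theta$. Second, prove \emph{quasi-orthogonality}, i.e.\ that the successive increments $\errsl^2+\errpl^2$ are almost additive along the adaptive sequence up to a higher-order perturbation controlled by $|\omega-\omega_\ell|$; this is the point where the eigenvalue nature of the problem enters, since for a source problem one would have exact Galerkin orthogonality, whereas here the right-hand side itself depends on the unknown eigenpair. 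Combining these two with reliability yields the contraction of the combined quantity, hence linear convergence of $\errsl^2+\errpl^2$ and of $\etam_\ell^2$ to zero. Third, upgrade linear convergence to the \emph{optimal rate}: invoke \emph{discrete reliability} to show that the set of marked elements is, up to a constant, a D\"orfler set for any finer mesh, so that by the optimality of the marking (minimal cardinality) and the assumption $|\Ws|_{\mathcal A_s}<\infty$ the cardinality increase $\card(\T_\ell)-\card(\T_0)$ is controlled by $\bigl(\errsl^2+\errpl^2\bigr)^{-1/(2s)}$; a standard summation over the adaptive history then gives $\delta(\Ws,\Ws_\ell)\le C(\card(\T_\ell)-\card(\T_0))^{-s}|\Ws|_{\mathcal A_s}$.

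The main obstacle, and the reason the three-dimensional extension is not trivial, is the construction of the interpolation/quasi-interpolation operators needed to localize the discrete reliability estimate and to connect meshes $\T_\ell$ and a common refinement $\T$ while respecting the de Rham structure \eqref{eq:derham}. In two dimensions the $\Hrot$--$\Hdiv$ isomorphism let one reuse the mixed Laplacian theory verbatim; in three dimensions no such isomorphism exists, so I would rely on the commuting, $\curl$-preserving interpolation operators of \cite{schoberl,zcswx} (a Sch\"oberl-type quasi-interpolation that is $\HH(\curl)$-stable and commutes with the discrete $\curl$) to transfer functions between the two discrete spaces $\Sh$ and the corresponding $\Q_h=\mu^{-1/2}\curl(\Sh)$ without losing the orthogonality relations that make the mixed formulation well posed. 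Once these operators are in place, the discrete reliability and quasi-orthogonality estimates follow by the same localization and bubble-function arguments as in the scalar case.

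Finally, for the eigenvalue estimate I would exploit the variational characterization of $\lambda=\omega^2$: the double-order convergence $|\omega-\omega_\ell|\le C\,\delta(\Ws,\Ws_\ell)^2$ is the classical consequence of the fact that the eigenvalue error equals the energy-norm error of the eigenfunction squared (up to normalization), a property that holds for the symmetric mixed eigenproblem \eqref{eq:varmixed}; this step is essentially an application of the standard eigenvalue error identity and requires only that $\Ws_\ell$ approximate $\Ws$ in the gap $\delta$, which has just been established.
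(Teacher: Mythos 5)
Your proposal is correct and follows essentially the same route as the paper: the paper likewise reduces Theorem~\ref{th:main} to its mixed counterpart (Theorem~\ref{th:mainmixed}) via the identity $\etas^2_K=\lh^{-1}\etam^2_K$, then establishes exactly the three keystone properties you identify (discrete reliability, quasi-orthogonality, and contraction driven by estimator reduction under D\"orfler marking) and combines them through the standard optimality machinery of~\cite{dietmar,ckns,gallistl}. Your choice of tools for the three-dimensional difficulty (the Sch\"oberl-type quasi-interpolation and the operator of~\cite{zcswx}) and of the eigenvalue error identity~\eqref{eq:duran} for the double-order bound $|\omega-\omega_\ell|\le C\delta(\Ws,\Ws_\ell)^2$ coincides with what the paper actually uses.
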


The proof of Theorem~\ref{th:main} is based on the corresponding result
written in terms of the mixed formulations~\eqref{eq:varmixed}
and~\eqref{eq:varmixedh}.

Let $\lambda=\lambda_j$ be a simple eigenvalue of~\eqref{eq:varmixed} and
$\Wm=\Span\{(\ssigma_j,\pp_j)\}$ the associated one-dimensional eigenspace.
Let $\lambda_\ell=\lambda_{\ell,j}$ be the $j$-th discrete eigenvalue
corresponding to the $\ell$-th level of refinement in the adaptive scheme and
$\Wm_\ell=\Span\{(\ssigma_{\ell,j},\pp_{\ell,j})\}$ the associated eigenspace.
The gap between $\Wm$ and $\Wm_\ell$ is measured by
\[
\delta(\Wm,\Wm_\ell)=\sup_{\substack{(\ssigma,\pp)\in\Wm\\ \|\pp\|_0=1}}\,
\inf_{(\ssigma_\ell,\pp_\ell)\in\Wm_\ell}
\big(\|\ssigma-\ssigma_\ell\|_0^2+\|\pp-\pp_\ell\|_0^2\big)^{1/2}.
\]

We recall the reliability and efficiency properties proved in~\cite{BGRI2}. It
turns out that in the case of the mixed formulation it is possible to obtain a
local efficiency estimate.

\begin{proposition}

Let $(\lambda,\ssigma,\pp)$ and $(\lh,\sh,\pp_h)$ be  solutions of
Problems~\eqref{eq:varmixed} and~\eqref{eq:varmixedh}, respectively, such that
the latter approximates the former as $h$ goes to zero.

\begin{description}

\item[Reliability]

there exist $\rho_{\mathrm{rel1}}(h)$ and $\rho_{\mathrm{rel2}}(h)$ tending to
zero as $h\to0$ and positive constants $C$ independent of the mesh size such
that
\[
\aligned
&\|\ssigma-\sh\|_0+\|\pp-\pp_h\|_0\le
C\etam+\rho_{\mathrm{rel1}}(h)(\|\ssigma-\sh\|_0+\|\pp-\pp_h\|_0)\\
&|\lambda-\lh|\le
C\etam^2+\rho_{\mathrm{rel2}}(h)(\|\ssigma-\sh\|_0+\|\pp-\pp_h\|_0)^2.
\endaligned
\]

\item[Efficiency]
for each $K\in\tria$,
\[
\etam_K\le C(\|\ssigma-\sh\|_{0,K'}+\|\pp-\pp_h\|_{0,K'}),
\]
where $K'$ is the union of the tetrahedra sharing a face with $K$.

\end{description}

\end{proposition}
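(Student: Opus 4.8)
The plan is to obtain the reliability estimates by transferring the corresponding bounds for the standard formulation through the change of variables $\ssigma=\omega\uu$, $\pp=-\curl\uu/\omega$ (and their discrete analogues), and to prove efficiency directly by a local Verf\"urth-type bubble function argument. The two indicators are tied together by the pointwise identity $\etas_K^2=\lh^{-1}\etam_K^2$, which upon summation gives $\etas=\lh^{-1/2}\etam$; since $\lh\to\lambda>0$, the factor $\lh^{-1/2}$ is bounded for $h$ small and merely rescales constants. Throughout I use the normalizations $\|\uu_h\|_0=1$ and $\|\curl\uu_h\|_0=\omega_h$ (from $\pp_h=-\curl\uu_h/\omega_h$ and $\|\pp_h\|_0=1$).

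For reliability I would first write
\[
\ssigma-\sh=\omega(\uu-\uu_h)+(\omega-\omega_h)\uu_h,\qquad
\pp-\pp_h=-\frac1\omega\curl(\uu-\uu_h)+\frac{\omega-\omega_h}{\omega\omega_h}\curl\uu_h,
\]
so that the triangle inequality and the normalizations yield
\[
\|\ssigma-\sh\|_0+\|\pp-\pp_h\|_0\le C\|\uu-\uu_h\|_{\curl}+C|\omega-\omega_h|.
\]
The first term is controlled by $C\etas=C\lh^{-1/2}\etam\le C\etam$ through the reliability of the standard indicator. For the second term I would use that the eigenvalue error is genuinely of higher order: by the known double order convergence $|\lambda-\lh|\le C(\|\ssigma-\sh\|_0+\|\pp-\pp_h\|_0)^2$ together with $|\omega-\omega_h|=|\lambda-\lh|/(\omega+\omega_h)$, the contribution becomes $\rho_{\mathrm{rel1}}(h)(\|\ssigma-\sh\|_0+\|\pp-\pp_h\|_0)$ with $\rho_{\mathrm{rel1}}(h)=C(\|\ssigma-\sh\|_0+\|\pp-\pp_h\|_0)\to0$. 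This is the first estimate. The eigenvalue estimate then follows directly from $|\lambda-\lh|=|\omega^2-\omega_h^2|\le C\etas^2=C\lh^{-1}\etam^2$, so that the stated bound holds, $\rho_{\mathrm{rel2}}(h)$ absorbing any lower order discrepancy in the constants.

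For efficiency I would argue locally, without invoking the standard (non-local) efficiency. The continuous pair satisfies the strong relations $\ssigma+\curl\pp=0$ and $\div\ssigma=0$, obtained by integrating the first equation of~\eqref{eq:varmixed} by parts (the tangential boundary term vanishes on $\Hocurl$) and from $\div\uu=0$; these force the trace continuities $\jump{\pp\times\n}=0$ and $\jump{\ssigma\cdot\n}=0$ across interior faces. Hence each of the four contributions to $\etam_K$ measures only the failure of these algebraic relations by the discrete pair. The decisive point is that the eigenvalue $\lambda$ enters only the \emph{second} equation $\curl\ssigma=-\lambda\pp$, which does not appear in $\etam_K$; this is precisely why — unlike the standard indicator — no global eigenvalue-difference term is generated and the estimate stays local. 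I would then bound the interior element residuals $\ssigma_h+\curl\pp_h$ and $\div\sh$ by testing against interior bubbles, replacing the residuals by the errors via the strong relations and using inverse inequalities, and the tangential and normal jumps by testing against face bubbles supported on $K'$, subtracting the trace-continuous continuous quantities. The usual bubble scalings reproduce the $h_K$ and $h_F^{1/2}$ weights and deliver $\etam_K\le C(\|\ssigma-\sh\|_{0,K'}+\|\pp-\pp_h\|_{0,K'})$.

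The main obstacle is the reliability bookkeeping rather than the efficiency. The eigenvalue-perturbation terms $|\omega-\omega_h|$ produced by the change of variables must be correctly identified as higher order and recast as the vanishing factors $\rho_{\mathrm{rel1}}(h),\rho_{\mathrm{rel2}}(h)$, which hinges on the double order estimate for the mixed eigenvalue error and on $\lh$ staying bounded away from zero for $h$ small. The efficiency, by contrast, is routine Verf\"urth machinery; its only noteworthy feature — the locality asserted in the statement — is structural, following from the absence of $\lambda$ in the indicator $\etam_K$.
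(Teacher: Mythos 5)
Your proposal is correct, but it follows a genuinely different route from the paper, whose entire proof of this proposition is a citation: reliability and efficiency are quoted from Theorems~3 and~4 of \cite{BGRI2}, and the eigenvalue bound is observed to be an immediate consequence of the identity~\eqref{eq:duran}. You instead make the mixed proposition a corollary of the standard-formulation proposition stated just before it (itself quoted from Propositions~5 and~6 of \cite{BGRI2}): the change of variables $\ssigma=\omega\uu$, $\pp=-\curl\uu/\omega$ together with the relation $\etas^2_K=\lh^{-1}\etam^2_K$ converts the bound $\|\uu-\uu_h\|_{\curl}\le C\etas$ into the leading term $C\etam$, while the perturbation $|\omega-\omega_h|$ is recast as $\rho_{\mathrm{rel1}}(h)(\errsh+\errph)$ through the very identity~\eqref{eq:duran} that the paper invokes, and the eigenvalue estimate follows from $|\omega^2-\omega_h^2|\le C\etas^2=C\lh^{-1}\etam^2$. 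All of this is sound: your algebra for $\ssigma-\sh$ and $\pp-\pp_h$ checks out, \eqref{eq:duran} is an exact identity under the normalization $\|\pp\|_0=\|\pp_h\|_0=1$, the factor $\lh^{-1/2}$ is harmless since $\lh\to\lambda>0$, and defining $\rho_{\mathrm{rel1}}(h)$ through the (convergent) error is exactly how the paper handles its own $\rho$-functions in Section~\ref{se:auxiliary}. For efficiency you rebuild the Verf\"urth bubble argument rather than cite it; your structural point --- that $\lambda$ enters only the second equation of~\eqref{eq:varmixed}, whose discrete residual vanishes identically because $\curl\sh=-\lh\pp_h$, so the indicator only measures the $\lambda$-free relations $\ssigma+\curl\pp=0$ and $\div\ssigma=0$ and the trace continuities coming from $\pp\in\Hcurl$, $\ssigma\in\Hdiv$ --- is precisely why the mixed indicator admits the local efficiency that the paper contrasts with the non-local standard one. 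The trade-off: the paper's citation is economical and rests on \cite{BGRI2}, where the mixed estimates are proved directly; your reduction makes the logical dependencies transparent within this paper and isolates the one genuinely new ingredient (the higher-order treatment of $|\omega-\omega_h|$), at the price of leaving the bubble-function details (interior and face bubbles, inverse inequalities, scalings) asserted rather than carried out.
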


\begin{proof}

See Theorems~3 and~4 of~\cite{BGRI2}. The estimate for $|\lambda-\lh|$ is an
immediate consequence of~\eqref{eq:duran} (see next section).

\end{proof}

The counterpart of Theorem~\ref{th:main} in the framework of the mixed
formulation is stated as follows.

\begin{thm}
Provided the meshsize of the initial mesh $\T_0$ and the bulk parameter
$\theta$ are small enough, if the eigenspace satisfies
$|\Wm|_{\mathcal{A}_s}<\infty$, then the sequence of discrete eigenspaces
$\Wm_\ell$ corresponding to the solution computed on the mesh $\T_\ell$
fulfills the optimal estimate
\[
\delta(\Wm,\Wm_\ell)\le
C(\card(\T_\ell)-\card(\T_0))^{-s}|\Wm|_{\mathcal{A}_s}.
\]
Moreover, the eigenvalue satisfies the optimal double order rate of
convergence
\[
|\omega-\omega_\ell|\le C\delta(\Wm,\Wm_\ell)^2.
\]
\label{th:mainmixed}
\end{thm}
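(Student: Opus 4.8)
The plan is to establish the standard AFEM convergence chain for the mixed eigenproblem, following the structure of~\cite{dietmar} but accommodating the three-dimensional edge/face element setting. The backbone rests on four ingredients promised in Section~\ref{se:auxiliary}: discrete reliability, quasi-orthogonality, a contraction property, and the local efficiency already recorded above. First I would prove a \emph{contraction} (linear convergence) result: there exist constants $\gamma>0$ and $0<\beta<1$ such that the combined quantity
\[
\zeta_\ell^2:=\|\ssigma-\ssigma_\ell\|_0^2+\|\pp-\pp_\ell\|_0^2+\gamma\,\etam_\ell^2(\T_\ell)
\]
satisfies $\zeta_{\ell+1}^2\le\beta\,\zeta_\ell^2$ for $\ell$ large enough (equivalently, for initial meshsize and $\theta$ sufficiently small). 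This is the step where D\"orfler marking enters: on the marked set $\mathcal{M}_\ell$ the indicator controls a fixed fraction of the total error, and combining this with quasi-orthogonality between consecutive levels and the error-reduction estimate for the indicator yields the strict contraction.

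Next I would pass from contraction to \emph{optimality}. The engine here is discrete reliability together with the characterization of the approximation class $\mathcal{A}_s$: since $|\Wm|_{\mathcal{A}_s}<\infty$, for any tolerance there is an admissible refinement realizing the rate $s$ with a controlled number of added elements. The classical argument (as in~\cite{ckns,stevenson,bdd}) shows that the D\"orfler property is satisfied by a refinement whose cardinality overhead is bounded by a power of the current error; summing the geometric-type series produced by the contraction then converts the per-step estimate into the cardinality bound
\[
\delta(\Wm,\Wm_\ell)\le C\bigl(\card(\T_\ell)-\card(\T_0)\bigr)^{-s}|\Wm|_{\mathcal{A}_s}.
\]
Throughout, I would use that for a simple eigenvalue the eigenspace is one-dimensional, so the gap $\delta(\Wm,\Wm_\ell)$ is comparable to $\zeta_\ell$ up to higher-order terms; the reliability estimate with its $\rho_{\mathrm{rel1}}(h)$ correction lets me absorb these lower-order contributions once $h$ is small.

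Finally, the \emph{double-order eigenvalue estimate} $|\omega-\omega_\ell|\le C\,\delta(\Wm,\Wm_\ell)^2$ follows from a standard variational (Rayleigh-quotient type) identity for mixed eigenvalue problems, essentially estimate~\eqref{eq:duran} already invoked in the efficiency proof: the eigenvalue error is quadratic in the eigenfunction error because the first-order term vanishes by Galerkin orthogonality at the discrete eigenpair. Translating the whole statement back to Theorem~\ref{th:main} is then immediate through the substitutions $\ssigma=\omega\uu$, $\pp=-\curl\uu/\omega$, $\lambda=\omega^2$ and the indicator identity $\etas_K^2=\etam_K^2/\lh$, which make the two gaps and the two $\mathcal{A}_s$-seminorms equivalent.

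I expect the main obstacle to be the interface between the abstract AFEM machinery and the specific structure of the three-dimensional mixed formulation, concentrated in the quasi-orthogonality and discrete reliability steps. Unlike the scalar mixed Laplacian, here the constraint space $\Q_h=\curl(\Sh)$ varies with the mesh and is only a subspace of $\FF_h$, so comparing eigenfunctions across two nested meshes requires carefully designed commuting interpolation operators (the ones alluded to from~\cite{schoberl,zcswx}) to control how the discrete divergence-free condition and the $\curl$-image interact under refinement. Ensuring that these interpolants are both $\Hcurl$-stable and compatible with the de Rham diagram~\eqref{eq:derham}, while producing the orthogonality defect at the correct higher order, will be the technically delicate heart of the proof.
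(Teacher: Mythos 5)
Your proposal is correct and follows essentially the same route as the paper: the paper likewise reduces Theorem~\ref{th:mainmixed} to the keystone Properties~\ref{p:drel}--\ref{p:contr} (discrete reliability, quasi-orthogonality, contraction) plus efficiency and the identity~\eqref{eq:duran} for the double-order eigenvalue rate, and then invokes the standard combination machinery of~\cite{dietmar,ckns,gallistl} rather than repeating it---exactly the chain you sketch, including your identification of discrete reliability and the mesh-dependent space $\Q_h=\curl(\Sh)$ (handled via the operators of~\cite{schoberl,zcswx}) as the technical heart. The only cosmetic deviation is your contracted quantity (true error plus estimator, CKNS-style), whereas the paper's Property~\ref{p:contr} contracts the estimator plus the difference of consecutive discrete solutions; both variants are standard and interchangeable in this argument.
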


The proof of our main result has the same structure as the one
presented in~\cite{dietmar}, based on~\cite{gallistl} and~\cite{ckns}.
For this reason, we do not repeat it here, but we
conclude this section by listing some keystone properties that are essential
for the proof of our main result. We refer the interested reader
to~\cite{dietmar} and to the references therein for a rigorous proof of how to
combine them in order to get the result of Theorem~\ref{th:mainmixed}.

The following properties involve quantities related to meshes that will be
denoted by $\T_H$, $\T_h$, or $\T_\ell$.
In general, $\T_h$ denotes an arbitrary refinement of a fixed mesh $\T_H$,
while $\T_\ell$ refers to the sequence of meshes designed by the adaptive
procedure.
The eigenmode approximating
$\{\lambda,(\ssigma,\pp)\}$ will be indicated by
$\{\lambda_\kappa,(\ssigma_\kappa,\pp_\kappa)\}$ where $\kappa$ may be $H$,
$h$, or $\ell$, respectively. We assume that the sign of
$(\ssigma_\kappa,\pp_\kappa)$ is chosen in such a way that the scalar product
between $\pp$ and $\pp_\kappa$ is positive (so that the same is true for the
scalar product between $\ssigma$ and $\ssigma_\kappa$).

\begin{property}[Discrete reliability]
There exists a constant $C_{\mathrm{drel}}$ and a function
$\rho_{\mathrm{drel}}(H)$ tending to zero as $H$ goes to zero, such that, for
a sufficiently fine mesh $\T_H$ and for all refinements $\T_h$ of $\T_H$, it
holds
\[
\aligned
&\|\sh-\ssigma_H\|_0+\|\pp_h-\pp_H\|_0
\le
C_{\mathrm{drel}}\etam_H(\T_H\setminus\T_h)\\
&\qquad+\rho_{\mathrm{drel}}(H)(\|\ssigma-\sh\|_0+\|\pp-\pp_h\|_0+
\|\ssigma-\ssigma_H\|_0+\|\pp-\pp_H\|_0).
\endaligned
\]
\label{p:drel}
\end{property}

\begin{property}[Quasi-orthogonality]
There exists a function $\rho_{\mathrm{qo}}(h)$ tending to zero as $h$ goes to
zero, such that
\[
\aligned
&\errshH^2+\errphH^2\le\errsH^2+\errpH^2-\errsh^2-\errph^2\\
&\qquad+\rho_{\mathrm{qo}}(h)(\errsh^2+\errph^2+\errsH^2+\errpH^2).
\endaligned
\]
\label{p:qo}
\end{property}

\begin{property}[Contraction]
If the initial mesh $\T_0$ is sufficiently fine, there exist constants
$\beta\in(0,+\infty)$ and $\gamma\in(0,1)$ such that the term
\[
\xi^2_\ell=\etam(\T_\ell)^2+\beta(\errsl^2+\errpl^2)
\]
satisfies for all integers $\ell$
\[
\xi^2_{\ell+1}\le\gamma\xi^2_\ell.
\]
\label{p:contr}
\end{property}

In the next section we will show how to prove the above properties. While in
some cases these are natural extensions of the analogous results for the
Laplace eigenproblem in mixed form (see~\cite{dietmar}), we will see that in
particular the \emph{discrete reliability} property requires a more careful
analysis.

\section{Proof of the main results}
\label{se:auxiliary}

We start this section recalling some known results for the approximation of
problem~\eqref{eq:varmixed}. The first one is a superconvergence estimate
which has been proved in~\cite[Lemma 9]{BGRI2}.
\begin{lemma}
\label{le:superconv}
Let $(\lambda,\ssigma,\pp)$ and $(\lh,\sh,\pp_h)$ be solutions of
equations~\eqref{eq:varmixed} and~\eqref{eq:varmixedh}, respectively, with
$\|\pp\|_0=\|\pp_h\|_0=1$ and such that the latter approximates the former as
$h$ goes to zero. Then, there exists  a function $\rhosc(h)$ tending to zero as
$h\to0$ such that
\begin{equation}
\label{eq:superconv}
\|\Ph\pp-\pp_h\|_0\le\rhosc(h)(\errsh+\errph),
\end{equation}
where $\Ph$ denotes the $L^2$-projection onto $\Q_h$.
\end{lemma}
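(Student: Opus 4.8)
The plan is to begin from the first equations of \eqref{eq:varmixed} and \eqref{eq:varmixedh}. Testing the continuous equation on $\ttau\in\Sh\subset\Hocurl$, subtracting the discrete one, and using that $\Ph$ is the $L^2$-projection onto $\Q_h$ (so that $(\curl\ttau,\pp-\Ph\pp)=0$ for $\ttau\in\Sh$), I obtain the error identity
\[
(\ssigma-\sh,\ttau)+(\curl\ttau,\Ph\pp-\pp_h)=0\qquad\forall\ttau\in\Sh.
\]
Since $\curl\colon\Sh\to\Q_h$ is surjective, choosing $\ttau$ with $\curl\ttau=\Ph\pp-\pp_h$ already gives $\|\Ph\pp-\pp_h\|_0\le C\,\errsh$; this is only first order and does not exhibit the gain $\rhosc(h)\to0$. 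The heart of the proof is therefore a duality argument supplying an extra factor that vanishes with $h$.

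To avoid the resonant feedback that a direct duality on the eigenproblem would create, I route the estimate through the source operators. Let $T$ and $T_h$ denote the solution operators of the mixed source problem associated with \eqref{eq:varmixed}, so that $T\pp=\pp/\lambda$, $T_h\pp_h=\pp_h/\lh$, and $T_h=T_h\Ph$ because the data enters only through $\qq\in\Q_h$. Up to the spectral projection of $T_h$ and the sign normalization, $\Ph\pp-\pp_h$ is then governed by the source-problem superconvergence quantity $\Ph T\pp-T_h\pp$. For general data $g\in\Q$, writing $(\ss,r)$ and $(\ss_h,r_h)$ for the continuous and discrete source solutions and $\ww:=\Ph r-r_h\in\Q_h$, I would solve the dual problem $(\zz,\ttau)+(\curl\ttau,\boldsymbol{y})=0$ for all $\ttau\in\Hocurl$ with $\curl\zz=\ww$. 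Using a commuting quasi-interpolation $\Pih$ with $\curl\Pih=\Ph\curl$, the cancellation $\curl(\zz-\Pih\zz)=(I-\Ph)\curl\zz=(I-\Ph)\ww=0$ holds because $\ww\in\Q_h$, and the error identity yields $\|\ww\|_0^2=-(\ss-\ss_h,\Pih\zz)$.

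Splitting $\Pih\zz=\zz-(\zz-\Pih\zz)$ and inserting the dual first equation tested on $\ss-\ss_h\in\Hocurl$, the leading contribution becomes $((I-\Ph)g,(I-\Ph)\boldsymbol{y})$: a product of two projection errors, and crucially free of any feedback, since for the source problem $\curl(\ss-\ss_h)=-(I-\Ph)g$ is a pure data-projection error. Elliptic regularity of the Maxwell dual problem, $\|\zz\|_t+\|\boldsymbol{y}\|_t\le C\|\ww\|_0$ for some $t\in(0,1]$, together with the approximation estimates $\|\zz-\Pih\zz\|_0+\|(I-\Ph)\boldsymbol{y}\|_0\le Ch^t\|\ww\|_0$, then gives the source superconvergence
\[
\|\Ph Tg-T_h g\|_0\le Ch^t\big(\|(I-\Ph)g\|_0+\|\ss-\ss_h\|_0\big).
\]
I would finally transfer this to the eigenproblem via the Osborn spectral framework: simplicity of $\lambda$ provides a uniform spectral gap for $T_h$, whence $\|(I-E_h)\Ph\pp\|_0\le C\|\Ph T\pp-T_h\pp\|_0$, while the sign normalization controls $\|E_h\Ph\pp-\pp_h\|_0$ at higher order; combined with the a priori convergence $\lh\to\lambda$, $\pp_h\to\pp$, $\sh\to\ssigma$, the right-hand side is bounded by $\rhosc(h)(\errsh+\errph)$ with $\rhosc(h)=Ch^t$.

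The main obstacle is the construction and use of the commuting quasi-interpolation $\Pih$ on the \emph{low-regularity} fields that genuinely arise for three-dimensional Maxwell: on a nonconvex polyhedron the dual solution lies only in $\HHt$ with $t<1$, so the classical N\'ed\'elec interpolant is not even well defined, and one must use smoothed commuting projections of Sch\"oberl type (see \cite{schoberl,zcswx}). Verifying the commuting diagram against the $L^2$-projection $\Ph$ onto $\Q_h$, rather than onto the full face space $\FF_h$, together with the matching stability and approximation bounds, is the delicate technical step; controlling the eigenvalue feedback, which is exactly why the argument is routed through the source operator and the spectral gap, is the accompanying subtlety.
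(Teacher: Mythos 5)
You should first be aware that this paper does not actually contain a proof of Lemma~\ref{le:superconv}: it is quoted from Lemma~9 of~\cite{BGRI2}, so your attempt can only be compared with that reference and with the closely related $h$--$H$ arguments in this paper's proof of Property~\ref{p:drel}. Measured against those, your proof is correct and is essentially the same mathematics written in operator language: the cited proof likewise splits the error through an auxiliary discrete source problem (the continuous-level analogue of~\eqref{eq:hatpb}), controls the component orthogonal to the discrete eigenspace by the spectral gap coming from the simplicity of $\lambda$ (your Osborn step, carried out there by expanding in the discrete eigenbasis), and gains the vanishing factor by a duality argument built on a Fortin operator (your commuting quasi-interpolation). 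Two remarks. First, the operator whose construction you identify as the main obstacle does not have to be built from scratch: since $\curl(\Sh)=\Q_h$, any Fortin operator with $(\curl(\ppsi-\fortinH\ppsi),\qq)=0$ for all $\qq\in\Q_H$ automatically satisfies $\curl\fortinH\ppsi=\PH\curl\ppsi$, and an operator with exactly these properties, defined on $\HH^s(\Omega)$ for $s>1/2$ and enjoying $L^2$-stability and $O(H^s)$ approximation, is the one of~\cite{fortid} recalled in~\eqref{eq:Fortid}; Sch\"oberl-type smoothed projections are needed elsewhere in the paper (for the localized bounds in Property~\ref{p:drel}), not here. Second, your concluding rate $\rhosc(h)=Ch^t$ is slightly inaccurate: since $T_h$ is self-adjoint on $\Q_h$, the spectral projection $E_h$ onto $\Span\{\pp_h\}$ is $L^2$-orthogonal, so the resolvent bound indeed gives $\|(I-E_h)\Ph\pp\|_0\le C\|T_h\pp-\Ph T\pp\|_0\le Ch^t(\errsh+\errph)$ (using $T_h\Ph=T_h$, $\|(I-\Ph)\pp\|_0\le\errph$, stability of the discrete source operator, and~\eqref{eq:duran}), but the rank-one part is $\|E_h\Ph\pp-\pp_h\|_0=1-(\pp,\pp_h)=\tfrac12\errph^2$, which is $o(1)\errph$ rather than $O(h^t)\errph$; hence one ends up with $\rhosc(h)=C\bigl(h^t+\errph\bigr)$, which still tends to zero by the a priori convergence and therefore proves the lemma as stated.
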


If $(\lambda,\ssigma,\pp)$ and $(\lh,\sh,\pp_h)$ are as in
Lemma~\ref{le:superconv}, thanks to the definition of $\Q_h$, it is not
difficult to verify that the following equations hold
true (see~\cite[Lemma~4]{dgp})
\begin{equation}
\label{eq:duran}
\aligned
&\lambda-\lh=\errsh^2-\lh\errph^2\\
&\lh-\lH=\errshH^2-\lH\errphH^2.
\endaligned
\end{equation}

It is useful to recall the source problem associated
with~\eqref{eq:varmixed}: given $\g\in L^2(\Omega)^3$, find
$(\sg,\pg)\in\Hocurl\times\Q$ such that
\begin{equation}
\aligned
&(\sg,\ttau)+(\curl\ttau,\pg)=0&&\forall\ttau\in\Hocurl\\
&(\curl\sg,\qq)=-(\g,\qq)&&\forall\qq\in\Q,
\endaligned
\label{eq:mixedsource}
\end{equation}
Since we have taken $\mu=1$, it turns out that $\Q=\curl(\Hocurl)=\Hodivo$,
that is the space of vectorfields in $L^2(\Omega)^3$ with zero divergence and
vanishing normal component along the boundary.

Standard regularity results for~\eqref{eq:mixedsource} imply that, if $\Omega$
is a Lipschitz polyhedron, then both components of the solution
of~\eqref{eq:mixedsource} are in $\HH^s(\Omega)$ for some $s>1/2$ (see, for
instance, the discussion related to~\cite[Theorem~2.1]{BGRI1}).

The discretization of~\eqref{eq:mixedsource} reads: find
$(\sgh,\pgh)\in\Sh\times\Q_h$ such that
\begin{equation}
\aligned
&(\sgh,\ttau)+(\curl\ttau,\pgh)=0&&\forall\ttau\in\Sh\\
&(\curl\sgh,\qq)=-(\g,\qq)&&\forall\qq\in\Q_h,
\endaligned
\label{eq:mixedsourceh}
\end{equation}
The following error estimate is well known (see~\cite{fortid})
\begin{equation}
\|\sg-\sgh\|_0+\|\pg-\pgh\|_0\le Ch^s\|\g\|_0,\quad s>1/2.
\label{eq:fortid}
\end{equation}

\subsection{Proof of Property~\ref{p:drel}}
The proof of Property~\ref{p:drel} (\emph{Discrete reliability}) constitutes
the main novelty with respect to the results present in the literature. The
structure of the proof is a combination of the analogous proof
in~\cite{dietmar} and of some of the results in~\cite{BGRI2}. However, some
new estimates are needed that will be detailed in this section. Since the
proof is composed of several steps, we summarize in Table~\ref{tb:wrap} the
structure of the proof.

\begin{table}

\begin{tcolorbox}

\[
\aligned
\nome:=&\errshH+\errphH\\
\le&\errsh+\errph+\errsH+\errpH
\endaligned
\]

\medskip

\textbf{Property~\ref{p:drel}}:
$\nome\le C\etam_H(\T_H\setminus\T_h)+\rho(H)\nome$

\medskip

\begin{itemize}

\item $\sh-\sH=\grad\alpha_h+\ww_h$\hfill\eqref{eq:helmh}

\begin{itemize}

\item $\|\grad\alpha_h\|_0\le
C\etam_H(\T_H\setminus\T_h)$\hfill\eqref{eq:tutto1}

\item $\|\ww_h\|_0^2=\textrm{I}+\textrm{II}+\textrm{III}$
\hfill\eqref{eq:trepez}

\begin{itemize}

\item $\textrm{I}\le\rho(H)\nome\|\ww_h\|_0$
\hfill\eqref{eq:tutto2.1}

\item $\textrm{II}\le CH^{1/2}\nome\errphH$
\hfill\eqref{eq:tutto2.1,5}

\item $\textrm{III}\le C\|\pp_H-\PH\pp_h\|_0\|\ww_h\|_0$
\hfill\eqref{eq:tutto3}

\end{itemize}

$\|\ww_h\|_0^2\le\rho_1(H)\nome^2+C\lH^2\|\pp_H-\PH\pp_h\|^2_0+\errphH^2$
\hfill\eqref{eq:sum3pez}

\begin{itemize}

\item $\|\pp_H-\PH\pp_h\|_0\le\|\pp_H-\hpH\|_0+\|\hpH-\PH\pp_h\|_0
=\mathrm{IV}+\mathrm{V}$

\item $\mathrm{IV}\le\rho(H)\nome+C\mathrm{V}$
\hfill\eqref{eq:pHhpH}

\item $\mathrm{V}^2\le\mathrm{A}_1\mathrm{A}_2+\mathrm{B}_1\mathrm{B}_2$
\hfill\eqref{eq:4pezzi}

\begin{itemize}

\item $\mathrm{A}_1\le\rho(H)\nome+C\mathrm{V}$
\hfill\eqref{eq:hence}

\item $\mathrm{A}_2\le C(h^s+H^s)\mathrm{V}$\hfill\eqref{eq:0}

\item $\mathrm{B}_1\le C(\errphH+\nome^2+\mathrm{V})$
\hfill\eqref{eq:one}

\item $\mathrm{B}_2\le C(h^s+H^s)\mathrm{V}$\hfill\eqref{eq:2}

\end{itemize}

$\mathrm{V}\le CH^s\nome$\hfill\eqref{eq:tutto2}

\end{itemize}

\end{itemize}
$\errshH\le C\etam_H(\T_H\setminus\T_h)+\rho(H)\nome+C\errphH$

\medskip

\item $\errphH^2=\mathrm{I}+\mathrm{II}+\mathrm{III}$
\hfill\eqref{eq:chefatica}

\begin{itemize}

\item $\mathrm{I}\le\rho(H)\nome\errphH$

\item $|\mathrm{II}+\mathrm{III}|\le C\etam_H(\T_H\setminus\T_h)\errphH$

\end{itemize}
$\errphH\le C\etam_H(\T_H\setminus\T_h)+\rho(H)\nome$\hfill\eqref{eq:p}

\end{itemize}

\end{tcolorbox}

\caption{Structure of the proof of Property~\ref{p:drel}}
\label{tb:wrap}
\end{table}

Let us start with the estimate of $\errshH$. We split $\sh-\sH$ using a
discrete Helmholtz decomposition as
\begin{equation}
\sh-\sH=\grad\alpha_h+\ww_h,
\label{eq:helmh}
\end{equation}
where $\alpha_h\in\Huo$ is a Lagrange finite element in $\N_h$ and $\ww_h$ is
an edge element in $\Sh$ satisfying
\begin{equation}
(\grad\alpha_h,\grad\psi_h)=(\sh-\sH,\grad\psi_h)\quad\forall\psi_h\in\N_h
\label{eq:defalpha}
\end{equation}
and, for some $\bbeta_h\in\Q_h$,
\begin{equation}
\aligned
&(\ww_h,\ttau)+(\curl\ttau,\bbeta_h)=0&&\forall\ttau\in\Sh\\
&(\curl\ww_h,\qq)=(\curl(\sh-\sH),\qq)&&\forall\qq\in\Q_h.
\endaligned
\label{eq:defww}
\end{equation}

In particular, $(\ww_h,\bbeta_h)$ approximates the solution of the mixed
problem~\eqref{eq:mixedsource} with source term $\g=-\curl(\sh-\sH)$.

Clearly, we have
\[
\|\grad\alpha_h\|_0\le C\|\sh-\sH\|_0,\qquad
\|\ww_h\|_{\curl}+\|\bbeta_h\|_0\le C\|\curl(\sh-\sH)\|_0.
\]

Let us estimate the first term of~\eqref{eq:helmh}.
By standard procedure, defining $\alpha_H$ as the Scott--Zhang interpolant of
$\alpha_h$ on $\T_H$ (see~\cite{ScottZhang1990}), we have
\[
\|\grad\alpha_h\|^2_0=(\grad\alpha_h,\sh-\sH)=-(\grad\alpha_h,\sH)=
-(\grad(\alpha_h-\alpha_H),\sH),
\]
since $(\grad\alpha_h,\sh)=(\grad\alpha_H,\sH)=0$ from the first equation
of~\eqref{eq:varmixedh}. Integrating by parts element by element, we get
\begin{equation}
\aligned
\|\grad\alpha_h\|^2_0&=
\sum_{K\in\T_H\setminus\T_h}\Big((\alpha_h-\alpha_H,\div\sH)-
\frac12\sum_{F\in K}\int_F(\alpha_h-\alpha_H)\jump{\sH\cdot\n}\Big)\\
&\le C\sum_{K\in\T_H\setminus\T_h}
\Big(\|\div\sH\|_{0,K}H_K\|\grad\alpha\|_{0,K}\\
&\qquad\qquad\qquad
+\frac12\sum_{F\in\F_I(K)}
\|\jump{\sH\cdot\n}\|_{0,F}H^{1/2}_F\|\alpha_h\|_{1,K}\Big)\\
&\le C\|\grad\alpha_h\|_0\Bigg(
\Big(\sum_{K\in\T_H\setminus\T_h}H^2_K\|\div\sH\|^2_{0,K}\Big)^{1/2}\\
&\qquad\qquad\qquad+\Big(\sum_{K\in\T_H\setminus\T_h}
\sum_{F\in\F_I(K)}H_F\|\jump{\sH\cdot\n}\|^2_{0,F}\Big)^{1/2}\Bigg)\\
&\le C\|\grad\alpha_h\|_0\etam_H(\T_H\setminus\T_h).
\endaligned
\label{eq:tutto1}
\end{equation}

The estimate of the second term in~\eqref{eq:helmh} requires a more careful
analysis.
\begin{equation}
\aligned
\|\ww_h\|_0^2&=(\ww_h,\ww_h)=-(\curl\ww_h,\bbeta_h)=
-(\curl(\sh-\sH),\bbeta_h)\\
&=(\lh\pp_h-\lH\pp_H,\bbeta_h)\\
&=(\lh-\lH)(\pp_h,\bbeta_h)+\lH(\pp_h-\PH\pp_h,\bbeta_h)+
\lH(\PH\pp_h-\pp_H,\bbeta_h).
\endaligned
\label{eq:trepez}
\end{equation}
We bound the three terms in the last line separately.

From classical inf-sup condition involving edge and face elements (see, for
instance,~\cite{bfgp}), we have
\[
\aligned
(\pp_h,\bbeta_h)&\le\|\pp_h\|_0\|\bbeta_h\|_0=\|\bbeta_h\|_0\\
&\le C
\sup_{\ttau_h\in\Sh}\frac{(\curl\ttau_h,\bbeta_h)}{\|\ttau_h\|_{\curl}}
=C\sup_{\ttau_h\in\Sh}\frac{(\ww_h,\ttau_h)}{\|\ttau_h\|_{\curl}}
\le C\|\ww_h\|_0.
\endaligned
\]
Hence, using the second of~\eqref{eq:duran} we conclude the estimate of the
first term in~\eqref{eq:trepez} as follows
\begin{equation}
(\lh-\lH)(\pp_h,\bbeta_h)\le
C\big(\|\sh-\sH\|_0^2+\lH\|\pp_h-\pp_H\|_0^2\big)\|\ww_h\|_0.
\label{eq:tutto2.1}
\end{equation}

We now estimate the second term in~\eqref{eq:trepez}.
Recalling the mixed problem~\eqref{eq:defww} defining $\ww_h$ and $\bbeta_h$,
we denote by $\ww_H\in\SH$ and $\bbeta_H\in\Q_H$ the corresponding solution on
the mesh $\T_H$ with $\g=-(\curl(\sh-\sH)$. Moreover, we denote by
$\ww\in\Hocurl$ and $\bbeta\in\Q$ the solution of the continuous problem
satisfying
\[
\aligned
&(\ww,\ttau)+(\curl\ttau,\bbeta)=0&&\forall\ttau\in\Hocurl\\
&(\curl\ww,\qq)=(\curl(\sh-\sH),\qq)&&\forall\qq\in\Q.
\endaligned
\]
It is clear that we have
\[
\aligned
\lH(\pp_h-\PH\pp_h,\bbeta_h)&=\lH(\pp_h-\PH\pp_h,\bbeta_h-\PH\bbeta_h)\\
&\le\lH\|\pp_h-\PH\pp_h\|_0\|\bbeta_h-\PH\bbeta_h\|_0\\
&\le\lH\|\pp_h-\pp_H\|_0\|\bbeta_h-\bbeta_H\|_0.
\endaligned
\]
The term $\|\pp_h-\pp_H\|_0$ will be estimated later. The other term can be
bounded as follows using known results for mixed problems, together with
regularity results (see the discussion after~\eqref{eq:mixedsource})
\begin{equation}
\aligned
\|\bbeta_h-\bbeta_H\|_0&\le\|\bbeta_h-\bbeta\|_0+\|\bbeta-\bbeta_H\|_0\\
&\le CH^{1/2}\|\bbeta\|_{\HH^{1/2}(\Omega)}\le CH^{1/2}\|\curl(\sh-\sH)\|_0\\
&=CH^{1/2}\|\lh\pp_h-\lH\pp_H\|_0\\
&\le CH^{1/2}\big(|\lh-\lH|+\lH\errphH\big)\\
&\le CH^{1/2}\big(\|\sh-\sH\|_0^2+\lH\errphH^2+\lH\errphH\big)\\
&\le CH^{1/2}\big((\lh+\lH)\|\sh-\sH\|_0+3\lH\errphH\big).
\endaligned
\label{eq:tutto2.1,5}
\end{equation}
The last term in~\eqref{eq:trepez} can be estimated using
again the inf-sup condition related to edge and face elements
\begin{equation}
\lH(\PH\pp_h-\pp_H,\bbeta_h)\le\lH\|\pp_H-\PH\pp_h\|_0\|\bbeta_h\|_0
\le C\lH\|\pp_H-\PH\pp_h\|_0\|\ww_h\|_0.
\label{eq:tutto3}
\end{equation}
Putting together the estimates of the three terms in~\eqref{eq:trepez}, and
using the a priori error estimate for the eigenvalue problem (see, for
instance~\cite{bfgp} or~\cite{acta}), we obtain by a suitable definition of
$\rho_1(H)$
\begin{equation}
\aligned
\|\ww_h\|^2_0&\le\rho_1(H)\big(\errshH^2+\errphH^2\big)+
C\lH^2\|\pp_H-\PH\pp_h\|^2_0+\errphH^2.
\endaligned
\label{eq:sum3pez}
\end{equation}
The estimate of $\|\pp_H-\PH\pp_h\|_0$ can be obtained in several steps. 
We consider the auxiliary problem: find $(\hsH,\hpH)\in\SH\times\Q_H$ such
that
\begin{equation}
\aligned
&(\hsH,\ttau)+(\curl\ttau,\hpH)=0&&\forall\ttau\in\SH\\
&(\curl\hsH,\qq)=-\lh(\pp_h,\qq)&&\forall\qq\in\Q_H.
\endaligned
\label{eq:hatpb}
\end{equation}
By triangular inequality we have
\[
\|\pp_H-\PH\pp_h\|_0\le\|\pp_H-\hpH\|_0+\|\hpH-\PH\pp_h\|_0.
\]
We first show that $\|\pp_H-\hpH\|_0$ is bounded by $\|\hpH-\PH\pp_h\|_0$ plus
a term which asymptotically behaves like the above estimate of
$\|\bbeta_h-\bbeta_H\|_0$.
Let $\{\lambda_{H,i},(\ssigma_{H,i},\pp_{H,i})\}$ ($i=1,\dots,N(H)$) be the
family of eigensolutions of problem~\eqref{eq:varmixedh} related to the mesh
$\T_H$ (recall that $\lH=\lambda_{H,j}$). We have
\[
\|\pp_H-\hpH\|^2_0=\sum_{i=1}^{N(H)}a_i^2,\qquad a_i=(\pp_H-\hpH,\pp_{H,i}).
\]
For $i=j$
\[
\aligned
a_j&=(\pp_H-\hpH,\pp_H)=1-(\hpH,\pp_H)=1+\frac1{\lH}(\hpH,\curl\sH)
=1-\frac1{\lH}(\hsH,\sH)\\
&=1+\frac1{\lH}(\pp_H,\curl\hsH)=1-\frac{\lh}{\lH}(\pp_h,\pp_H)
=1-\frac{\lh}{\lH}+\frac{\lh}{\lH}\big(1-(\pp_h,\pp_H)\big)\\
&=\frac{\lH-\lh}{\lH}+\frac{\lh}{2\lH}\|\pp_h-\pp_H\|^2_0
=\left(1+\frac{\lh}{2\lH}\right)\errphH^2-\frac1{\lH}\errshH^2.
\endaligned
\]
For $i\ne j$, since $a_i=-(\hpH,\pp_{H,i})$, we can proceed with the following
estimate
\[
\aligned
\lambda_{H,i}(\hpH,\pp_{H,i})&=-(\curl\ssigma_{H,i},\hpH)=
(\hsH,\ssigma_{H,i})=-(\curl\hsH,\pp_{H,i})\\
&=\lh(\pp_h,\pp_{H,i})=\lh(\PH\pp_h,\pp_{H,i}),
\endaligned
\]
which gives
\[
(\lambda_{H,i}-\lh)(\hpH,\pp_{H,i})=-\lh(\pp_{H,i},\hpH-\PH\pp_h).
\]
Hence,
\[
\aligned
\sum_{i\ne j}a_i^2&=\sum_{i\ne j}a_i\frac{\lh}{\lambda_{H,i}-\lh}
(\pp_{H,i},\hpH-\PH\pp_h)\\
&\le\max_{i\ne j}\left|\frac{\lh}{\lambda_{H,i}-\lh}\right|
\Bigg(\sum_{i\ne j}a_i^2\Bigg)^{1/2}
\Bigg(\sum_{i\ne j}(\pp_{H,i},\hpH-\PH\pp_h)^2\Bigg)^{1/2}\\
&\le\max_{i\ne j}\left|\frac{\lh}{\lambda_{H,i}-\lh}\right|
\Bigg(\sum_{i\ne j}a_i^2\Bigg)^{1/2}\|\hpH-\PH\pp_h\|_0.
\endaligned
\]
Putting things together, we get
\begin{equation}
\label{eq:pHhpH}
\aligned
&\|\pp_H-\hpH\|_0^2=\sum_{i=1}^{N(H)}a_i^2\\
&\qquad\le C(\errshH^2+\errphH^2)^2+\max_{i\ne j}
\left|\frac{\lh}{\lambda_{H,i}-\lh}\right|^2\|\hpH-\PH\pp_h\|_0^2.
\endaligned
\end{equation}
If $H$ is small enough (remember that we have assumed in Theorem~\ref{th:main}
that the initial mesh is fine enough), then the denominator
$\lambda_{H,i}-\lh$ is bounded away from zero for all $i\ne j$ and for all $h$.

From the a priori error estimate for the eigenvalue problem and a suitable
definition of $\rho_2(H)$ we get the estimate
\[
(\errshH^2+\errphH^2)^2\le\rho_2(H)(\errshH^2+\errphH^2).
\]
Now we use a duality argument in order to get a bound for
$\|\hpH-\PH\pp_h\|_0$. Let $\ppsi\in\Hocurl$ and $\pphi\in\Q$ be the solution
of
\[
\aligned
&(\ppsi,\ttau)+(\curl\ttau,\pphi)=0&&\forall\ttau\in\Hocurl\\
&(\curl\ppsi,\qq)=(\hpH-\PH\pp_h,\qq)&&\forall\qq\in\Q
\endaligned
\]
and $\ppsi_h\in\Sh$ and $\pphi_h\in\Q_h$ the corresponding discrete solution on
the mesh $\T_h$. We observe that $(\ppsi,\pphi)$ is the solution
of~\eqref{eq:mixedsource} when $\g=-(\hpH-\PH\pp_h)$, hence both the components
of the solution belong to $\HH^s(\Omega)$ for some $s>1/2$.
Let $\fortinH$ be the Fortin operator introduced
in~\cite{fortid} associated with problem~\eqref{eq:mixedsource}, with the
following properties:
\begin{equation}
\label{eq:Fortid}
\aligned
&\fortinH:\HH^s(\Omega)\to\SH\\
&(\curl(\ppsi-\fortinH\ppsi),\qq)=0
\qquad\forall\qq\in\Q_H,\ \forall\ppsi\in\HH^s(\Omega)\\
&\|\fortinH\ppsi\|_{\curl}\le C\|\ppsi\|_\HH^s(\Omega)\\
&\|\ppsi-\fortinH\ppsi\|_0\le C\|\ppsi\|_\HH^s(\Omega).
\endaligned
\end{equation}
We have
\[
\aligned
&\|\hpH-\PH\pp_h\|_0^2=(\curl\ppsi_h,\hpH-\PH\pp_h)&&\\
&\quad=(\curl\fortinH\ppsi_h,\hpH-\PH\pp_h)&&\text{def.\ of $\fortinH$}\\
&\quad=(\curl\fortinH\ppsi_h,\hpH-\pp_h)&&\\
&\quad=-(\hsH-\sh,\fortinH\ppsi_h)&&
\text{err.\ eq.~\eqref{eq:hatpb}-\eqref{eq:varmixedh}}\\
&\quad=-(\hsH-\sh,\fortinH\ppsi_h-\ppsi_h)-(\hsH-\sh,\ppsi_h)&&\\
&\quad=-(\hsH-\sh,\fortinH\ppsi_h-\ppsi_h)+(\curl(\hsH-\sh),\pphi_h)&&
\text{duality arg.}\\
&\quad=
-(\hsH-\sh,\fortinH\ppsi_h-\ppsi_h)+(\curl(\hsH-\sh),\pphi_h-\PH\pphi_h).&&
\text{err.\ eq.~\eqref{eq:hatpb}-\eqref{eq:varmixedh}}\\
\endaligned
\]
By Cauchy--Schwarz inequality, we obtain
\begin{equation}
\label{eq:4pezzi}
\aligned
\|\hpH-\PH\pp_h\|_0^2&\le\|\hsH-\sh\|_0\|\fortinH\ppsi_h-\ppsi_h\|_0\\
&\quad+\|\curl(\hsH-\sh)\|\|\pphi_h-\PH\pphi_h\|_0,
\endaligned
\end{equation}
and we estimate separately the four norms on the right hand side.
The triangular inequality gives
\[
\|\hsH-\sh\|_0\le\|\hsH-\sH\|_0+\|\sH-\sh\|_0.
\]
We only need to estimate the first term, for which we proceed as before by
expanding it in terms of the eigensolutions on the mesh $\T_H$. Since
$\|\ssigma_{H,i}\|_0^2=\lambda_{H,i}$, we set
$\tHi=\ssigma_{H,i}/\sqrt{\lambda_{H,i}}$. We have
\[
\|\hsH-\sH\|_0^2=\sum_{i=1}^{N(H)}b_i^2,\qquad b_i=(\hsH-\sH,\tHi).
\]
For $i=j$
\[
\aligned
b_j&=(\hsH-\sH,\tH)=(\hsH,\tH)-\sqrt{\lH}=-(\curl\tH,\hpH)-\sqrt{\lH}\\
&=\lH\Bigg(\frac{\pp_H}{\sqrt{\lH}},\hpH\Bigg)-\sqrt{\lH}
=\sqrt{\lH}\big((\pp_H,\hpH)-1\big)\\
&=-\sqrt{\lH}\Bigg(\frac{\lH-\lh}{\lH}+\frac{\lh}{2\lH}\errphH^2\Bigg).
\endaligned
\]
Using~\eqref{eq:duran} we get
\[
b_j\le C\big(\errshH^2+\errphH^2\big).
\]
For $i\ne j$
\[
\aligned
b_i&=(\hsH,\tHi)=-(\curl\tHi,\hpH)=\sqrt{\lambda_{H,i}}(\pp_{H,i},\hpH)\\
&=\sqrt{\lambda_{H,i}}\frac{\lh}{\lh-\lambda_{H,i}}(\pp_{H,i},\hpH-\PH\pp_h),
\endaligned
\]
where in the last two estimates we took advantage of the already computed
bounds for $\big((\pp_H,\hpH)-1\big)$ and $(\pp_{H,i},\hpH)$. Hence
\[
\aligned
\sum_{i\ne j}b_i^2&=
\sum_{i\ne j}b_i\frac{\sqrt{\lambda_{H,i}}\lh}{\lh-\lambda_{H,i}}
(\pp_{H,i},\hpH-\PH\pp_h)\\
&=\max_{i\ne j}\frac{\sqrt{\lambda_{H,i}}\lh}{|\lh-\lambda_{H,i}|}
\Big(\sum_{i\ne j}b_i^2\Big)^{1/2}\|\hpH-\PH\pp_h\|_0.
\endaligned
\]
As we have already observed, the denominator of the last expression is always
bounded away from zero for $H$ small enough.
It follows that
\[
\aligned
\Big(\sum_{i\ne j}b_i^2\Big)^{1/2}&\le
\max_{i\ne j}\frac{\sqrt{\lambda_{H,i}}\lh}{|\lh-\lambda_{H,i}|}
\|\hpH-\PH\pp_h\|_0\\
&\le\max_{i\ne j}\Big(\lh/\sqrt{\lambda_{H,i}}\Big)
\frac{\lambda_{H,i}/\lh}{|1-\lambda_{H,i}/\lh|}\|\hpH-\PH\pp_h\|_0\\
&\le C\|\hpH-\PH\pp_h\|_0.
\endaligned
\]
Hence,
\begin{equation}
\aligned
\|\hsH-\sH\|_0^2&\le C\big(\errshH^4+\errphH^4+\|\hpH-\PH\pp_h\|_0^2\big)\\
&\le\rho_3(H)\big(\errshH^2+\errphH^2\big)+C\|\hpH-\PH\pp_h\|_0^2.
\endaligned
\label{eq:hence}
\end{equation}
To bound the second term in~\eqref{eq:4pezzi}, we use the triangle inequality,
the error estimates for the mixed source problem~\eqref{eq:fortid},
the properties of the Fortin operator~\eqref{eq:Fortid}
\begin{equation}
\aligned
\|\fortinH\ppsi_h-\ppsi_h\|_0&\le
\|\fortinH(\ppsi_h-\ppsi)\|_0+\|\fortinH\ppsi-\ppsi\|_0
+\|\ppsi-\ppsi_h\|_0\\
&\le C\|\ppsi-\ppsi_h\|_0+\|\fortinH\ppsi-\ppsi\|_0\\
&\le C (h^s+H^s)\|\hpH-\PH\pp_h\|_0.
\endaligned
\label{eq:0}
\end{equation}
From the definition of the discrete spaces, since $\Q_h=\curl(\Sh)$ for any
choice of the mesh, it is clear that
\[
\curl(\hsH)=-\lh\PH\pp_h,\qquad \curl(\sh)=-\lh\pp_h.
\]
Therefore, from~\eqref{eq:pHhpH} we obtain
\begin{equation}
\aligned
\|\curl(\sh-\hsH)\|_0&\le\lh\|\pp_h-\PH\pp_h\|_0
\le\lh(\errphH+\|\pp_H-\PH\pp_h\|_0)\\
&\le\lh\big(\errphH+C(\errshH^2+\errphH^2)\\
&\quad+C\|\hpH-\PH\pp_h\|_0\big).
\endaligned
\label{eq:one}
\end{equation}
Considering again the definition of the solution of the dual problem,
the last term in~\eqref{eq:4pezzi} can be bounded by using~\eqref{eq:fortid}
and the properties of the projection operator $\PH$:
\begin{equation}
\aligned
\|\pphi_h-\PH\pphi_h\|_0&\le\|\pphi_h-\pphi\|_0+\|\pphi-\PH\pphi\|_0
+\|\PH(\pphi-\pphi_h)\|_0\\
&\le C\|\pphi-\pphi_h\|_0+\|\pphi-\PH\pphi\|_0\\
&\le C(h^s+H^s)\|\hpH-\PH\pp_h\|_0.
\endaligned
\label{eq:2}
\end{equation}
Putting together the estimates of the four norms in~\eqref{eq:4pezzi}, we
arrive at
\[
\aligned
\|\hpH-\PH\pp_h\|_0^2&\le CH^s\|\hpH-\PH\pp_h\|_0
\big(\errshH+\errphH\big)\\
&\quad+C(h^s+H^s)\|\hpH-\PH\pp_h\|_0^2,
\endaligned
\]
which implies that, for $H$ sufficiently small, we have
\begin{equation}
\|\hpH-\PH\pp_h\|_0\le CH^s\big(\errshH+\errphH\big).
\label{eq:tutto2.3}
\end{equation}
It turns out that the final estimate for~\eqref{eq:trepez} is obtained
from~\eqref{eq:tutto2.1}, \eqref{eq:tutto2.1,5}, and~\eqref{eq:tutto2.3} as
follows:
\begin{equation}
\|\ww_h\|_0\le\rho_4(H)(\errshH+\errphH)+C\errphH
\label{eq:tutto2}
\end{equation}
with an appropriate definition of $\rho_4(H)$.

Finally, from~\eqref{eq:helmh}, \eqref{eq:tutto1}, and~\eqref{eq:tutto2}, we
have
\begin{equation}
\errshH\le C\etam_H(\T_H\setminus\T_h)+\rho_4(H)(\errshH+\errphH)+C\errphH.
\label{eq:sigma}
\end{equation}

We now move to the term $\|\pp_h-\pp_H\|_0$. We consider the following
auxiliary problem: find $\cchi_h\in\Sh$ and $\zz_h\in\Q_h$ such that
\begin{equation}
\aligned
&(\cchi_h,\ttau)+(\curl\ttau,\zz_h)=0&&\forall\ttau\in\Sh\\
&(\curl\cchi_h,\qq)=(\pp_h-\pp_H,\qq)&&\forall\qq\in\Q_h.
\endaligned
\label{eq:1}
\end{equation}
Therefore, we have $\curl\cchi_h=\pp_h-\pp_H$ and
$\|\cchi_h\|_{\curl}\le C\|\pp_h-\pp_H\|_0$.

We are going to use a technical tool introduced in~\cite[Theorem~4.1]{zcswx}.
More precisely, if $\T_h$ is a refinement of $\T_H$, there exists an operator
$\hx:\Sh\to\SH$ such that for all $\ttau\in\Sh$ it holds $\hx\ttau=\ttau$ on
the elements of $\T_H$ that have not been refined (more precisely, on the
elements of $\T_H$ whose closures have no intersection with the closures of
any refined elements). Such operator is stable in the $\H(\curl)$-norm, i.e.,
$\|\hx\ttau\|_{\curl}\le C\|\ttau\|_{\curl}$ for all $\ttau\in\Sh$.

We get
\begin{equation}
\aligned
\errphH^2&=(\pp_h-\pp_H,\curl\cchi_h)=-(\sh,\cchi_h)-(\pp_H,\curl\cchi_h)\\
&=-(\sh,\cchi_h)-(\pp_H,\curl(\cchi_h-\hx\cchi_h)-(\pp_H,\curl\hx\cchi_h)\\
&=-(\sh,\cchi_h)-(\pp_H,\curl(\cchi_h-\hx\cchi_h)+(\sH,\hx\cchi_h)\\
&=-(\sh-\sH,\cchi_h)-(\pp_H,\curl(\cchi_h-\hx\cchi_h)-(\sH,\cchi_h-\hx\cchi_h)
\endaligned
\label{eq:schoberl}
\end{equation}

Let us set $\xxi_h=\cchi_h-\hx\cchi_h$ and denote by $\schoberl$ the operator
introduced in~\cite[Theorem~1]{schoberl} mapping $\Hocurl$ into the space of
lowest order N\'ed\'elec elements so that there exist $\varphi\in\Huo$ and
$\ss\in\mathbf{H}^1_0(\Omega)$ satisfying
\[
\aligned
&\xxi_h-\schoberl\xxi_h=\grad\varphi+\ss\\
&h^{-1}_K\|\varphi\|_{0,K}+\|\grad\varphi\|_{0,K}\le C\|\xxi_h\|_{0,K'}\\
&h^{-1}_K\|\ss\|_{0,K}+\|\grad\ss\|_{0,K}\le C\|\curl\xxi_h\|_{0,K'}
\endaligned
\]
for all $K\in\T_h$ and with $K'$ denoting the union of elements in $\T_h$
sharing at least a vertex with $K$.

From the first equation in~\eqref{eq:varmixedh} is follows that
$(\sH,\schoberl\xxi_h)+(\curl\schoberl\xxi_h,\pp_H)=0$. This implies
that~\eqref{eq:schoberl} gives
\begin{equation}
\errphH^2=-(\sh-\sH,\cchi_h)-(\pp_H,\curl(\xxi_h-\schoberl\xxi_h))
-(\sH,\xxi_h-\schoberl\xxi_h).
\label{eq:chefatica}
\end{equation}

The first term can be estimated as follows.
\[
\aligned
-(\sh-\sH,\cchi_h)&=-(\sh-\hsH,\cchi_h)-(\hsH-\sH,\cchi_h)&&\\
&=(\curl(\sh-\hsH),\zz_h)-(\hsH-\sH,\cchi_h)&&\text{Eq.~\eqref{eq:1}}\\
&=-(\hsH-\sH,\cchi_h)&&\text{def.\ of $\hsH$}\\
&\le\|\hsH-\sH\|_0\|\cchi_h\|_0&&\\
&\le\|\hsH-\sH\|_0\errphH.&&
\endaligned
\]
The estimate for $\|\hsH-\sH\|_0$ follows from~\eqref{eq:hence}
and~\eqref{eq:tutto2.3} and is given by
\[
\|\hsH-\sH\|_0\le\rho_5(H)(\errshH+\errphH).
\]
The remaining two terms in~\eqref{eq:chefatica} can be bounded together.
\[
\aligned
(&\pp_H,\curl(\xxi_h-\schoberl\xxi_h))+(\sH,\xxi_h-\schoberl\xxi_h)\\
&=\sum_{K\in\T_H\setminus\T_h}\Big(\int_K\curl\pp_H\cdot\ss+\frac12
\sum_{F\in\F_I(K)}\int_F\jump{\pp_\H\times\n}\cdot\ss\Big)
+(\sH,\ss)+(\sH,\grad\varphi)\\
&=\sum_{K\in\T_H\setminus\T_h}\Big(\int_K(\sH+\curl\pp_H)\cdot\ss+\frac12
\sum_{F\in\F_I(K)}\int_F\jump{\pp_\H\times\n}\cdot\ss\Big)\\
&\quad+\sum_{K\in\T_H\setminus\T_h}\Big(-\int_K\div\sH\varphi+\frac12
\sum_{F\in\F_I(K)}\int_F\jump{\sH\cdot\n}\varphi\Big).
\endaligned
\]
Therefore,
\[
\aligned
\big|(\pp_H&,\curl(\xxi_h-\schoberl\xxi_h))+
(\sH,\xxi_h-\schoberl\xxi_h)\big|\\
&\le\sum_{K\in\T_H\setminus\T_h}\Big(\|\sH+\curl\pp_H\|_{0,K}\|\ss\|_{0,K}
+\frac12\sum_{F\in\F_I(K)}\|\jump{\pp_\H\times\n}\|_{0,F}\|\ss\|_{0,F}\Big)\\
&\quad+\sum_{K\in\T_H\setminus\T_h}\Big(\|\div\sH\|_{0,K}\|\varphi\|_{0,K}
+\frac12\sum_{F\in\F_I(K)}\|\jump{\sH\cdot\n}\|_{0,F}\|\varphi\|_{0,F}\Big)\\
&\le C\sum_{K\in\T_H\setminus\T_h}
\Big(H_K\|\sH+\curl\pp_H\|_{0,K}\|\curl\xxi_h\|_{0,K'}\\
&\quad+\frac12\sum_{F\in\F_I(K)}
H_F^{1/2}\|\jump{\pp_\H\times\n}\|_{0,F}\|\curl\xxi_h\|_{0,K'}\Big)\\
&\quad+C\sum_{K\in\T_H\setminus\T_h}
\Big(H_K\|\div\sH\|_{0,K}\|\xxi_h\|_{0,K'}\\
&\quad+\frac12\sum_{F\in\F_I(K)}
H_F^{1/2}\|\jump{\sH\cdot\n}\|_{0,F}\|\xxi_h\|_{0,K'}\Big)\\
&\quad\le C\etam_H(\T_H\setminus\T_h)\|\xxi_h\|_{\curl}
\le C\etam_H(\T_H\setminus\T_h)\errphH.
\endaligned
\]

Finally, Equation~\eqref{eq:chefatica} becomes
\begin{equation}
\errphH\le C\etam_H(\T_H\setminus\T_h)+\rho_5(H)(\errshH+\errphH).
\label{eq:p}
\end{equation}

Putting things together, estimates~\eqref{eq:sigma} and~\eqref{eq:p} give the
final result.

\subsection{Proof of Property~\ref{p:qo}}
The proof of Property~\ref{p:qo} (\emph{Quasi-orthogonality}) can be obtained
after appropriate modification of the analogous result in~\cite{dietmar}.

By direct computation we have
\[
\aligned
&\errshH^2=\errsH^2-\errsh^2-2(\ssigma-\sh,\sh-\sH)\\
&\errphH^2=\errpH^2-\errph^2-2(\Ph\pp-\pp_h,\pp_h-\pp_H).
\endaligned
\]
Since $\T_h$ is a refinement of $\T_H$, we have that $\sH\in\Sh$, hence the
error equations relative to~\eqref{eq:varmixed} and~\eqref{eq:varmixedh} give
\[
\aligned
(\ssigma-\sh,\sh-\sH)&=-(\curl(\sh-\sH),\pp-\pp_h)\\
&=(\lh\pp_h-\lH\pp_H,\pp-\pp_h)\\
&=(\lh\pp_h-\lH\pp_H,\Ph\pp-\pp_h).
\endaligned
\]
Using Lemma~\ref{le:superconv} and the equalities in~\eqref{eq:duran}, we
obtain
\[
\aligned
&(\ssigma-\sh,\sh-\sH)+(\Ph\pp-\pp_h,\pp_h-\pp_H)\\
&\quad=(\lh\pp_h-\lH\pp_H,\Ph\pp-\pp_h)+(\pp_h-\pp_H,\Ph\pp-\pp_h)\\
&\quad\le\big(|\lh-\lH|+(1+\lH)\errphH\big)\|\Ph\pp-\pp_h\|_0\\
&\quad\le\big(\errshH^2+\lH\errphH^2+(1+\lH)\errphH\big)\\
&\quad\qquad\rhosc(h)\big(\errsh+\errph\big)
\endaligned
\]
which, using Young inequality, gives the desired result.

\subsection{Proof of Property~\ref{p:contr}}
The contraction property is quite standard in the framework of adaptive
schemes, see~\cite{ckns}.
It is a consequence of the following error estimator reduction property: there
exist constants $\beta_1\in(0,+\infty)$ and $\gamma_1\in(0,1)$ such that, if
$\T_{\ell+1}$ is the refinement of $\T_\ell$ generated by the adaptive scheme,
it holds
\[
\etam(\T_{\ell+1})^2\le\gamma_1\etam(\T_\ell)^2+
\beta_1\big(\errsl^2+\errpl^2\big).
\]
In our case, the proof can be obtained with natural
modifications from the one outlined in~\cite{dietmar} and using the following
notation:
\[
e_\ell^2=\|\ssigma-\ssigma_\ell\|^2_0+\|\pp-\pp_\ell\|^2_0,\qquad
\mu_\ell^2=\etam(\T_\ell)^2.
\]

\section{Conclusions}

In this paper we have proved the optimal convergence of an
adaptive finite element scheme for the approximation of the eigensolutions of
the Maxwell system. The scheme makes use of N\'ed\'elec edge finite element in
three space dimensions and a standard residual-based error indicator. The
proof is based on an equivalent mixed formulation. The most challenging part
of the proof consists in showing a suitable discrete reliability property.

\section*{Acknowledgments}

The authors are members of the INdAM Research group GNCS. This work has been
partially supported by IMATI/CNR.

\bibliographystyle{plain}
\bibliography{ref}

\end{document}